\documentclass[12pt,a4paper]{article}

\RequirePackage{etex}
\usepackage[utf8]{inputenc}
\usepackage{amsmath, amssymb, amsthm, amsfonts, mathrsfs}
\usepackage{bbm, commath}
\usepackage{multicol}
\usepackage{blindtext}
\usepackage{graphicx}
\usepackage{tikz-network}
\usetikzlibrary {positioning, patterns, calc}
\usetikzlibrary{graphs, graphs.standard, quotes}
\usepackage{authblk}
\usepackage{enumitem}
\usepackage[colorlinks=true,linkcolor=black,citecolor=black]{hyperref}
\usepackage{tabularx}
\usepackage{array}

\usepackage{natbib}
 \usepackage{setspace}
 \let\oldbibitem\bibitem
 \renewcommand{\bibitem}{\setstretch{1.2}\oldbibitem}

\newtheorem{thm}{Theorem}

\newtheorem{prop}{Proposition}
\newtheorem{lem}{Lemma}
\newtheorem{cor}{Corollary}
\newtheorem{exm}{Example}

\newtheorem{rem}{Remark}
\def \e {{\bf e}}
\def \f {{\bf f}}
\def \g {{\bf g}}
\def \h {{\bf h}}
\def \u {{\bf u}}
\def \v {{\bf v}}
\def \w {{\bf w}}

\def \e {{\mathbf e}}

\newcommand{\ob}[1]{\left(#1\right)}
\newcommand{\cb}[1]{\left\lbrace #1\right\rbrace}
\newcommand{\tb}[1]{\left[#1\right]}

\title{\it Fractional revival in complementary prisms of graphs}
\author[1]{Sarojini Mohapatra}
\author[2]{Hiranmoy Pal}
\affil[1,2]{National Institute of Technology Rourkela, India-769008 palh@nitrkl.ac.in}
\date{\today}

\begin{document}
	
	\maketitle

	%%%%%%%%%%% Abstract %%%%%%%%%%%
	
		\begin{abstract}
        The complementary prism $G\overline{G}$ of a graph $G$ is obtained from the disjoint union of $G$ and its complement $\overline{G}$ by adding an edge between each vertex $a$ in $G$ and its copy $a'$ in $\overline{G}.$ This paper explores a general framework for studying fractional revival with respect to real symmetric matrices with a block structure. The framework is then used to show that, for a fixed state $\u$ in $G$ orthogonal to the all-one vector, the complementary prism $G\overline{G}$ exhibits fractional revival from the state $[\u,\mathbf{0}]^T$ with respect to the adjacency, Laplacian, and signless Laplacian matrices. We further characterize perfect pair state transfer in the complementary prism of a complete graph and establish the existence of perfect pair and plus state transfer in the complementary prism of a complete bipartite graph. 

\vspace{0.5cm}      
    \noindent{\it Keywords:} Continuous-time quantum walk,  Spectra of graphs, Fractional revival, Perfect state transfer, Complementary prism of a graph. \\\\
    {\it MSC: 15A16, 05C50, 81P45.}
	\end{abstract}

	%%%%%%%%%%% Introduction %%%%%%%%%%%
	\newpage

\section{Introduction} 
The reliable transfer of quantum information and the generation of entanglement are important problems in quantum information theory. 
Continuous-time quantum walks provide a natural framework for studying quantum state transfer in quantum spin networks. Such networks can be modeled by an undirected weighted graph where vertices represent qubits and the interactions between them are represented by edges, with the edge weight corresponding to the coupling strength between the qubits. Let $M(G)$ be a real symmetric matrix associated with a graph $G.$ A \emph{continuous-time quantum walk} on $G$ with $M(G)$ as its Hamiltonian is governed by the transition matrix 
 \[U_{M(G)}(t)=\exp{(itM(G))},\quad \text{where $t\in\mathbb{R}.$} \] 
 When the context is clear, we write $M(G)$ as $M.$
A \emph{real pure state} is represented by a unit vector in $\mathbb{R}^n$ \cite{god7}. For each vertex $a$ of $G,$ the characteristic vector $\e_a$ is called the \emph{vertex state} associated with $a.$ Real pure states of the form $\frac{1}{\sqrt{2}}\ob{\e_a-\e_b}$ and $\frac{1}{\sqrt{2}}\ob{\e_a+\e_b}$ are called \emph{pair state} and \emph{plus state}, respectively. A graph $G$ is said to exhibit \emph{perfect state transfer} (PST) between two linearly independent real pure states $\u$ and $\v$ if there exists a time $\tau>0$ and a complex number $\gamma$ of unit modulus such that
\begin{equation}\label{6e1}
U_{M}(\tau)\u=\gamma\v.
\end{equation}
  PST between vertex states, pair states, and plus states is called \emph{vertex PST}, \emph{pair PST}, and \emph{plus PST}, respectively. If $\u=\v$ in \eqref{6e1}, then the state $\u$ is said to be \emph{periodic} in $G.$ 
PST in quantum spin networks has been extensively studied since its introduction by Bose \cite{bose}, with much of the literature devoted to the existence and characterization of vertex PST \cite{alv, bas1, pal9, cou7, god1, kirk3}.
However, Godsil \cite[Corollary 6.2]{god2} showed that vertex PST is a rare phenomenon in finite unweighted graphs,  motivating the investigation of PST beyond vertex states. The concept of pair PST  with respect to the Laplacian matrix was first introduced by Chen and Godsil in \cite{che1}, which shows that on a fixed number of vertices, there are more graphs with pair PST than graphs with vertex PST. Further studies have considered PST between more general states, including pair and plus states \cite{jia, ojha1, pal10, wang}, $s$-pair states \cite{kim}, and real pure states \cite{ god8, god7, pal11}.

Fractional revival is another relevant quantum transport phenomenon that generalizes the study of PST and plays an important role in entanglement generation. 
Let $\u$ and $\v$ be two linearly independent states in a graph $G.$ The graph $G$ is said to exhibit \emph{fractional revival} from $\u$ to $\v$ at time $\tau$ if there exist complex scalars $\alpha,\beta$ with $\beta\neq 0$ such that 
\begin{equation}\label{6eq2}
    U_{M}(\tau)\u=\alpha\u+\beta\v.
\end{equation}
 If $\alpha$ becomes zero, then $G$ is said to exhibit PST between $\u$ and $\v$ at $\tau.$  If $\beta=0,$ then $\u$ becomes periodic in $G.$ In particular, if $\u$ and $\v$ in \eqref{6eq2} represent vertex states $\e_a$ and $\e_b,$ respectively, then fractional revival occurs from vertex $a$ to $b$ in $G.$ The existence and characterization of fractional revival from vertex states have been studied for several classes of graphs, including paths and cycles \cite{chan2}, trees \cite{chan3}, double cones \cite{mon4}, threshold graphs \cite{kirk6}, products, covers, and joins of graphs \cite{cha2}. Here, we characterize the existence of fractional revival from a real pure state in the complementary prism of a graph.

  The paper is organized as follows. Section \ref{6s2} contains preliminary results on the existence of PST between real pure states and the spectra of the complementary prism of a graph.  Section \ref{6s3} develops a general framework for the existence of fractional revival from a real pure state relative to a real symmetric matrix with a block structure and provides a graph construction that preserves fractional revival. Section \ref{6s4} applies the framework to characterize fractional revival in the complementary prism of a graph. Further, we investigate pair PST in the complementary prism of a complete graph and pair and plus PST in that of a complete bipartite graph, yielding an infinite family of graphs exhibiting plus PST.
  Throughout the paper, $\mathbf{1}$ denotes the all-ones vector, $I$ the identity matrix, and $\mathbf{0}$ the zero matrix of appropriate order.

\section{Preliminaries}\label{6s2}
The Hamiltonian $M$ corresponding to an undirected graph $G$ is a real symmetric matrix. In this context, we mainly consider $M$ to be the adjacency, Laplacian, or signless Laplacian matrix of $G.$
The graph $G$ is called simple if it has no loops and each of its edges has weight $1.$ For a simple graph $G,$ the adjacency matrix $A$ is defined by $A_{a,b}=1$ if there is an edge between vertices $a$ and $b,$ and $0$ otherwise. The Laplacian and signless Laplacian matrices are defined by $L=D-A$ and $Q=D+A,$ respectively, where $D$ is the degree matrix of $G.$ Since $M$ is real symmetric, it has the spectral decomposition 
   \[M=\sum_{j=1}^{d}\lambda_jE_{\lambda_j},\]
   where $\lambda_1,\lambda_2,\ldots,\lambda_d$ are the distinct eigenvalues of $M$ with corresponding eigenprojection matrices $E_{\lambda_1},E_{\lambda_2},\ldots,E_{\lambda_d}.$ Consequently, the spectral decomposition of the transition matrix $U_M(t)$ is given by
   \[U_M(t)=\sum_{j=1}^{d}\exp{(it\lambda_j)}E_{\lambda_j}.\]
   Therefore, the eigenvalues and eigenvectors of $M$ determine the behavior of the quantum walk. The \emph{eigenvalue support} of a state $\u$ relative to $M,$ denoted by $\sigma_\u(M),$ is the set 
   \[\sigma_\u(M)=\{\lambda_j:E_{\lambda_j}\u\neq 0\}.\]
   \begin{prop}\cite{god7}
    A state $\u\in\mathbb{R}^n\backslash\{0\}$ is a fixed state if and only if $\u$ is an eigenvector for $M$ associated with the lone eigenvalue in $\sigma_\u(M).$
\end{prop}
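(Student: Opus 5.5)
We need to prove that $\u\neq 0$ is fixed if and only if $\u$ is an eigenvector of $M$ whose eigenvalue is the unique element of $\sigma_\u(M)$. Here "fixed" is read in the sense of \cite{god7}: $U_M(t)\u=\gamma(t)\u$ for all $t\in\mathbb{R}$, with $\gamma(t)$ a scalar of unit modulus. The whole argument rests on the spectral decomposition
\[U_M(t)=\sum_{j=1}^{d}\exp(it\lambda_j)E_{\lambda_j},\]
together with the decomposition $\u=\sum_j E_{\lambda_j}\u$, which holds because the eigenprojections sum to $I$. Both directions amount to comparing these two expansions.

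\emph{Eigenvector implies fixed.} Suppose $\u$ is an eigenvector, $M\u=\lambda\u$. Then $E_{\lambda}\u=\u$ and $E_{\lambda_j}\u=0$ for every $\lambda_j\neq\lambda$. Hence $\sigma_\u(M)=\{\lambda\}$, so $\lambda$ is the lone eigenvalue in the support. It also gives $U_M(t)\u=\exp(it\lambda)\u$ for all $t$, so $\u$ is fixed.

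\emph{Fixed implies eigenvector.} Suppose $U_M(t)\u=\gamma(t)\u$ for all $t$. Applying $E_{\lambda_k}$ and using $E_{\lambda_k}E_{\lambda_j}=\delta_{jk}E_{\lambda_k}$ gives
\[\exp(it\lambda_k)E_{\lambda_k}\u=\gamma(t)E_{\lambda_k}\u \quad\text{for every } k \text{ and } t.\]
Take two distinct eigenvalues $\lambda_k\neq\lambda_l$ in $\sigma_\u(M)$. Both $E_{\lambda_k}\u$ and $E_{\lambda_l}\u$ are nonzero, so the identity forces $\exp(it\lambda_k)=\gamma(t)=\exp(it\lambda_l)$ for all $t$. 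Choosing $t$ with $t(\lambda_k-\lambda_l)\notin 2\pi\mathbb{Z}$ gives a contradiction. Therefore $\sigma_\u(M)=\{\lambda\}$ for a single eigenvalue $\lambda$; it is nonempty because $\u\neq0$. Then $\u=E_\lambda\u$, so $M\u=\lambda\u$.

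The only delicate point is the definition of "fixed". If it meant periodicity at a single time $\tau$, the statement would be false. The argument above therefore relies on the all-$t$ version of \cite{god7}, which is what makes it possible to pick a suitable $t$ to separate the distinct eigenvalues. Everything else is routine manipulation of orthogonal projections.
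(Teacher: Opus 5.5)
Your proof is correct. Comparing $U_M(t)\u=\sum_j \exp(it\lambda_j)E_{\lambda_j}\u$ with $\gamma(t)\u=\sum_j \gamma(t)E_{\lambda_j}\u$ projection by projection forces $\sigma_\u(M)$ to be a singleton, and the converse is immediate. The paper gives no proof of its own; it only cites \cite{god7}, whose notion of a fixed state is the for-all-$t$ one you assumed, and your spectral-decomposition argument is the standard one, so nothing is missing.
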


Two states $\u$ and $\v$ with $\u\neq \pm\v$ are said to be strongly cospectral in $G$ relative to $M$ if 
for each $\lambda_j\in\sigma_{\u}(M),$ we have
$E_{\lambda_j}\u= \pm E_{\lambda_j}\v.$ Let $\sigma_{\u,\v}^+(M)$ and $\sigma_{\u,\v}^-(M)$ denote the sets of eigenvalues $\lambda_j\in\sigma_{\u}(M)$ satisfying $E_{\lambda_j}\u= E_{\lambda_j}\v$ and  $E_{\lambda_j}\u=- E_{\lambda_j}\v,$ respectively. Strong cospectrality is a necessary condition for the existence of PST between $\u$ and $\v$ \cite[Lemma 5.1]{god7}. The following characterization of strong cospectrality, established for pair states with respect to the adjacency matrix in \cite[Proposition 4.1]{saro}, extends to both pair and plus states for any real symmetric matrix $M$ associated with $G.$

\begin{prop}\label{6p2}
     The states $\frac{1}{\sqrt{2}}\ob{\e_a\pm\e_b}$ and $\frac{1}{\sqrt{2}}\ob{\e_c\pm\e_d}$ are strongly cospectral in $G$ if and only if for each eigenvalue $\lambda$ of $M(G),$ there exists $\delta_\lambda\in\{\pm 1\}$ such that every eigenvector $\v$ corresponding to $\lambda$ satisfies 
     \[\v(a)\pm\v(b)=\delta_\lambda[\v(c)\pm\v(d)].\]
 \end{prop}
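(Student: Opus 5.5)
The plan is to unwind the definition of strong cospectrality in terms of the eigenprojections and compare entrywise with the eigenvectors. Write $\u=\frac{1}{\sqrt{2}}\ob{\e_a\pm\e_b}$ and $\v=\frac{1}{\sqrt{2}}\ob{\e_c\pm\e_d}$. For each eigenvalue $\lambda$ of $M$, fix an orthonormal basis $\w_1,\ldots,\w_k$ of the eigenspace, so that $E_\lambda=\sum_{i}\w_i\w_i^T$. Then
\[E_\lambda\u=\tfrac{1}{\sqrt{2}}\sum_{i=1}^k\tb{\w_i(a)\pm\w_i(b)}\w_i,\qquad E_\lambda\v=\tfrac{1}{\sqrt{2}}\sum_{i=1}^k\tb{\w_i(c)\pm\w_i(d)}\w_i.\]
Because the $\w_i$ are linearly independent, $E_\lambda\u=\delta E_\lambda\v$ holds if and only if $\w_i(a)\pm\w_i(b)=\delta\tb{\w_i(c)\pm\w_i(d)}$ for every $i$. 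The maps $\w\mapsto \w(a)\pm\w(b)$ and $\w\mapsto\w(c)\pm\w(d)$ are linear. So a single sign $\delta_\lambda$ works for every basis vector exactly when it works for every eigenvector $\v$ in the eigenspace.

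For the forward direction, assume $\u$ and $\v$ are strongly cospectral. If $\lambda\in\sigma_\u(M)$, we have $E_\lambda\u=\delta_\lambda E_\lambda\v$ for some $\delta_\lambda\in\{\pm1\}$. By the linear-independence remark, this gives the required identity for every eigenvector associated with $\lambda$.

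If instead $\lambda\notin\sigma_\u(M)$, then $E_\lambda\u=0$, so $\w(a)\pm\w(b)=0$ for every eigenvector $\w$. We still need $\w(c)\pm\w(d)=0$, i.e.\ $E_\lambda\v=0$. This is the main obstacle: the definition as stated only quantifies over $\sigma_\u(M)$, so we must show $\sigma_\v(M)\subseteq\sigma_\u(M)$. I will use norms. Strong cospectrality gives $\|E_\lambda\u\|=\|E_\lambda\v\|$ for each $\lambda\in\sigma_\u(M)$. Summing over $\sigma_\u(M)$ gives $\sum_{\lambda\in\sigma_\u(M)}\|E_\lambda\v\|^2=\|\u\|^2=1=\|\v\|^2=\sum_\lambda\|E_\lambda\v\|^2$. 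Hence $E_\lambda\v=0$ for every $\lambda\notin\sigma_\u(M)$, and either choice of $\delta_\lambda$ works there.

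For the converse, assume such a $\delta_\lambda$ exists for every eigenvalue. Applying the identity to the basis vectors $\w_i$ and using the displayed expansions gives $E_\lambda\u=\delta_\lambda E_\lambda\v$ for all $\lambda$, in particular for all $\lambda\in\sigma_\u(M)$. The condition $\u\neq\pm\v$ is implicit in the statement, taken as for distinct states, exactly as in \cite[Proposition 4.1]{saro}. The argument uses only the symmetry of $M$, so it holds for any real symmetric $M$ and for both sign choices, which gives both the pair and the plus versions.
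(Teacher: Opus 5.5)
Your proof is correct. The paper gives no argument of its own for this proposition. It cites \cite[Proposition 4.1]{saro}, which covers pair states and the adjacency matrix, and asserts that the result extends to plus states and to any real symmetric $M$.

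Your write-up proves that extension directly. You expand $E_\lambda$ in an orthonormal eigenbasis and note that $\w\mapsto\w(a)\pm\w(b)$ and $\w\mapsto\w(c)\pm\w(d)$ are linear functionals. This shows that only the spectral decomposition of a real symmetric matrix is used, and that the choice of signs plays no role.

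It also makes explicit two points the citation glosses over:
\begin{itemize}
\item The paper's definition of strong cospectrality quantifies only over $\sigma_{\u}(M)$, while the statement asks for a sign at every eigenvalue. The forward direction therefore needs $\sigma_{\v}(M)\subseteq\sigma_{\u}(M)$. Your norm count $\sum_{\lambda\in\sigma_{\u}(M)}\|E_\lambda\v\|^2=\|\u\|^2=\|\v\|^2$ settles this, using $a\neq b$ and $c\neq d$ so that both states are unit vectors.
\item The converse is literally false when $\{a,b\}=\{c,d\}$. In that case the eigenvector condition holds trivially, but $\u=\pm\v$ is excluded by the definition. So the distinctness hypothesis you flag is genuinely necessary, not a formality.
\end{itemize}
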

 If a graph $G$ admits PST between states $\u$ and $\v,$ then both the states are periodic in $G$ \cite[Lemma 5.1]{god7}. The following result gives necessary and sufficient conditions for a state to be periodic in $G.$ 
 \begin{thm}\label{6t3}\cite{god7}
     A state $\u\in\mathbb{R}^n$ is periodic in $G$ if and only if $\sigma_\u(M)$ satisfies the ratio condition. If $|\sigma_\u(M)|\geq 3,$ and $\sigma_\u(M)$ is closed under algebraic conjugates,  then $\u$ is periodic if and only if either  $(i)$ $ \sigma_\u(M)\subseteq \mathbb{Z}$ or $(ii)$ each $\lambda_j\in \sigma_\u(M)$
      is of the form $\lambda_j=\frac{1}{2}(a+b_j\sqrt{\Delta}),$ where $a,b_j,\Delta$ are integers and $\Delta>1$ is square-free.
 \end{thm}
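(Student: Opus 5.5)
The plan is to go through the spectral decomposition $U_M(t)=\sum_j e^{it\lambda_j}E_{\lambda_j}$ and use the fact that $\u\in\mathbb{R}^n$ is real. Write $\sigma_\u(M)=\{\lambda_1,\dots,\lambda_k\}$, where $E_{\lambda_j}\u\neq 0$ for each $j$. Then $\u=\sum_{j}E_{\lambda_j}\u$ and $U_M(t)\u=\sum_j e^{it\lambda_j}E_{\lambda_j}\u$. The nonzero vectors $E_{\lambda_j}\u$ are mutually orthogonal, so they are linearly independent. Hence $U_M(\tau)\u=\gamma\u$ with $|\gamma|=1$ holds if and only if $e^{i\tau\lambda_j}=\gamma$ for every $j$, that is, $e^{i\tau(\lambda_j-\lambda_l)}=1$ for all $j,l$. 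This reduces periodicity to an arithmetic condition on the support.

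\textbf{Ratio condition.} If $k\le 1$, then $\u$ is a fixed state by the preceding Proposition, and the claim is trivial. For $k\ge 2$, the equalities $e^{i\tau(\lambda_j-\lambda_l)}=1$ mean that $\tau(\lambda_j-\lambda_l)\in 2\pi\mathbb{Z}$ for all $j,l$. Suppose $\u$ is periodic. Then for $r\neq s$, the ratio $(\lambda_j-\lambda_l)/(\lambda_r-\lambda_s)$ is a quotient of integers, hence rational; this is the ratio condition. Conversely, suppose all such ratios are rational. Fix $\lambda_1\neq\lambda_2$ and write $\lambda_j-\lambda_1=q_j(\lambda_2-\lambda_1)$ with $q_j\in\mathbb{Q}$. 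Let $N$ be a common denominator of the $q_j$ and set $\tau=2\pi N/(\lambda_2-\lambda_1)$. Then every $\tau(\lambda_j-\lambda_l)$ lies in $2\pi\mathbb{Z}$, so $\u$ is periodic.

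\textbf{Closed support.} Now assume $k\ge 3$ and that $\sigma_\u(M)$ is closed under algebraic conjugates. Each $\lambda_j$ is a root of the characteristic polynomial of $M$. In our setting that polynomial has integer coefficients (for non-integral $M$ one uses only that the eigenvalues are algebraic and the closure hypothesis), so the $\lambda_j$ are algebraic integers. Under the ratio condition, $\lambda_j=\lambda_1+q_j\delta$ with $\delta=\lambda_2-\lambda_1$ and $q_j\in\mathbb{Q}$. Summing over $j$ gives $\sum_j\lambda_j=k\lambda_1+(\sum_j q_j)\delta$. The left side is fixed by every Galois automorphism, since the support is closed under conjugates, so it is rational.

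The Galois step is the main obstacle. Apply any automorphism $\phi$. It permutes the support, and $\phi(\lambda_j)=\phi(\lambda_1)+q_j\phi(\delta)$. Since $\phi(\delta)$ is again a difference of support elements, $\phi(\delta)/\delta\in\mathbb{Q}$. From this I would show that $\delta^2\in\mathbb{Q}$. Roughly, $\phi$ maps the finite set $\{\lambda_j-\lambda_l\}$ onto itself while scaling by the rational $\phi(\delta)/\delta$. The scaling factor must therefore be $\pm1$, because the set of differences has a fixed maximum absolute value and is preserved.

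Hence $\delta\in\mathbb{Q}(\sqrt{\Delta})$ for some square-free $\Delta$, or $\delta\in\mathbb{Q}$. The relation $\sum_j\lambda_j\in\mathbb{Q}$ then places each $\lambda_1$, and so each $\lambda_j$, in $\mathbb{Q}+\mathbb{Q}\delta$.

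If $\delta\in\mathbb{Q}$, the $\lambda_j$ are rational algebraic integers, so they lie in $\mathbb{Z}$; this is case $(i)$. Otherwise each $\lambda_j$ is an algebraic integer in $\mathbb{Q}(\sqrt{\Delta})$, and the standard description of the ring of integers of a quadratic field gives the form $\lambda_j=\frac12(a_j+b_j\sqrt{\Delta})$. Taking conjugates, the rational part must be the same for all $j$. Indeed, $\lambda_j-\lambda_l$ must be a rational multiple of $\sqrt{\Delta}$, since otherwise a nonzero rational part would contradict the ratio condition against the irrational difference $\delta$. Thus $a_j=a$ for all $j$, which is case $(ii)$.

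For the converse in both cases, all differences $\lambda_j-\lambda_l$ lie in $\mathbb{Z}$ or in $\tfrac12\sqrt{\Delta}\,\mathbb{Z}$. Their ratios are then rational, so the ratio condition holds and the first part gives periodicity.
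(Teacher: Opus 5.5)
The paper gives no proof of this statement; it is quoted from \cite{god7}. Your argument is correct for integer matrices and is essentially the standard one. Comparing coefficients of the mutually orthogonal vectors $E_{\lambda_j}\u$ yields the ratio condition. When the support is closed under conjugation, Galois invariance gives $\sum_j\lambda_j\in\mathbb{Q}$ and $\delta^2\in\mathbb{Q}$, and the rings of integers of $\mathbb{Q}$ and $\mathbb{Q}(\sqrt{\Delta})$ then give (i) or (ii). Your scaling argument for $\phi(\delta)=\pm\delta$ is sound. It is even quicker to note that $\phi$ has finite order $m$, so $r^m=1$ with $r\in\mathbb{Q}$. An equivalent route is that $\sum_{j,l}(\lambda_j-\lambda_l)^2=\delta^2\sum_{j,l}(q_j-q_l)^2$ is Galois-invariant, hence rational.

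You should delete the parenthetical claiming that non-integral $M$ can be handled using only algebraicity and the closure hypothesis. That claim is false. The second part of the theorem genuinely needs the $\lambda_j$ to be algebraic integers, since this is what forces $\sigma_\u(M)\subseteq\mathbb{Z}$ and $a,b_j\in\mathbb{Z}$. For example, take $M=\tfrac13A(P_3)$ and let $\u=\e_1$ be the vertex state of an end vertex. Then $\sigma_\u(M)=\{0,\pm\tfrac{\sqrt2}{3}\}$ is closed under conjugation and $\u$ is periodic, yet neither (i) nor (ii) holds. So the statement must be read for integer matrices such as $A$, $L$, $Q$, which is exactly where your main argument operates.

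A minor fix: in the converse of the first part, take $\tau=2\pi N/|\lambda_2-\lambda_1|$ so that $\tau>0$. You never use $|\sigma_\u(M)|\ge 3$. That is harmless, because your argument shows the dichotomy already holds for two-element closed supports.
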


We next include the adjacency, Laplacian, and signless Laplacian spectra of the complementary prism \cite{card1}, which will be used in Section \ref{6s4} to characterize pair and plus PST. 

\subsection{Complementary prism}
The complement of a graph $G,$ denoted by $\overline{G},$ is the graph with the same vertex set as $G,$ in which two distinct vertices are adjacent if and only if they are not adjacent in $G.$ We relabel the vertices of $\overline{G}$ as $V(\overline{G})=\{a':a\in V(G)\},$ where $a'$ denotes the vertex corresponding to $a.$
The complementary prism $G\overline{G}$ of a graph $G$ is obtained from the disjoint union of $G$ and its complement $\overline{G}$ by adding an edge for each pair of vertices $(a,a'),$ where $a$ is in $G$ and its copy $a'$ is in $\overline{G}.$  
For $M\in\{A,L,Q\},$ the matrix representing the complementary prism $G\overline{G}$ of $G$ is defined as follows.
\begin{equation}\label{6eq3}
M(G\overline{G})=
\begin{bmatrix}
M(G)+\delta I & \zeta I\\
\zeta I & M(\overline{G})+\delta I
\end{bmatrix},
\end{equation}
where
\[
\delta=
\begin{cases}
0, & \text {if}~~M=A,\\
1, & \text {if}~~M=L\text{ or }Q,
\end{cases}
\qquad \text{and}\quad
\zeta=
\begin{cases}
-1, & \text{if}~~ M=L,\\
\phantom{-}1, & \text{if}~~ M=A\text{ or }Q.
\end{cases}
\] 
 The relationships between the eigenvalues and eigenvectors of the adjacency, Laplacian, and signless Laplacian matrices of the complementary prism $G\overline{G}$ and those of the graph $G$ are investigated in \cite{card1}. An eigenvalue $\lambda$ of $M(G)$ is said to be a main eigenvalue if there is an associated eigenvector $\v$ which is not orthogonal to $\mathbf{1}.$ Otherwise, $\lambda$ is called non-main. 
 Let $M(G)$ have $d$ distinct eigenvalues. We denote the spectrum of $M(G)$ by 
\[\text{spec}(M(G))=\{\lambda_1^{m_1},\lambda_2^{m_2},\ldots,\lambda_d^{m_d}\},\] 
where $m_j$ indicates the multiplicity of the eigenvalue $\lambda_j.$

\subsection{Spectra of the complementary prism of a graph}
Let $G$ be a connected and regular graph. 
The following result, obtained by combining Theorem 3.2, Theorem 3.3, and Corollary 3.4 of \cite{card1}, determines the full set of eigenvalues and corresponding eigenvectors of the complementary prism $G\overline{G}$ with respect to the adjacency matrix. 
\begin{thm}\cite{card1}\label{5cor1}
    If $G$ is a connected $k$-regular graph on $n$ vertices with spectrum $\operatorname{spec}(A(G))=\cb{k,\lambda_2^{m_2}, \ldots, \lambda_d^{m_d}},$ and $\mathbf{u}_j\perp \mathbf{1}$ is an  eigenvector corresponding to $\lambda_j,$ then the eigenvalues and corresponding eigenvectors of the complementary prism $G\overline{G}$ are as follows. 
    \begin{enumerate}[label=(\roman*)]
        \item $\alpha_{\pm}(\lambda_j)=\dfrac{-1\pm\sqrt{(2\lambda_j+1)^2+4}}{2},$ each with multiplicity $m_j,$ for $2\leq j\leq d$ with eigenvectors $[
            \mathbf{u}_j,
            \ob{\alpha_{\pm}(\lambda_j)-\lambda_j}\mathbf{u}_j]^T$ which are orthogonal to the all one vector.
        
        \item$\beta_{\pm}(n,k)=\dfrac{n-1\pm\sqrt{(n-1-2k)^2+4}}{2}$ with eigenvectors $[
            \mathbf{1},
            \ob{\beta_{\pm}-k}\mathbf{1}]^T.$ Moreover, $\beta_{\pm}(n,k)$ are the two main eigenvalues when $G\overline{G}$ is non-regular.
    \end{enumerate}
\end{thm}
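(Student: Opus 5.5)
We verify directly that the listed vectors are eigenvectors of
\[
A(G\overline{G})=\begin{bmatrix} A(G) & I\\ I & A(\overline{G})\end{bmatrix},
\]
and then check that they span $\mathbb{R}^{2n}$. The basic tool is the identity $A(\overline{G})=J-I-A(G)$, where $J$ is the all-ones matrix. Since $G$ is $k$-regular and connected, $\mathbf{1}$ spans the $k$-eigenspace of $A(G)$. We may therefore choose an orthogonal eigenbasis of $A(G)$ consisting of $\mathbf{1}$ together with vectors $\mathbf{u}_j\perp\mathbf{1}$. For such $\mathbf{u}_j$ we have $J\mathbf{u}_j=0$, so $A(\overline{G})\mathbf{u}_j=(-1-\lambda_j)\mathbf{u}_j$. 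We also have $A(\overline{G})\mathbf{1}=(n-1-k)\mathbf{1}$.

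For part (i), take a vector of the form $[\mathbf{u}_j,c\,\mathbf{u}_j]^T$ and require
\[
A(G\overline{G})[\mathbf{u}_j,c\mathbf{u}_j]^T=\alpha[\mathbf{u}_j,c\mathbf{u}_j]^T.
\]
The top block gives $\lambda_j+c=\alpha$. The bottom block gives $1+c(-1-\lambda_j)=\alpha c$. Substituting $c=\alpha-\lambda_j$ into the second equation yields the quadratic
\[
\alpha^2+\alpha-\lambda_j^2-\lambda_j-1=0.
\]
Its roots are $\alpha_\pm(\lambda_j)=\tfrac{-1\pm\sqrt{(2\lambda_j+1)^2+4}}{2}$, which is the stated formula, with $c=\alpha_\pm-\lambda_j$. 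Running $\mathbf{u}_j$ over an orthogonal basis of the $\lambda_j$-eigenspace restricted to $\mathbf{1}^\perp$ gives multiplicity $m_j$ for each sign. For $j\ge 2$ that eigenspace lies entirely in $\mathbf{1}^\perp$, so all $m_j$ dimensions are available. Orthogonality to the all-ones vector of $\mathbb{R}^{2n}$ is immediate, since $\mathbf{1}^T\mathbf{u}_j=0$.

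For part (ii), the same ansatz $[\mathbf{1},c\mathbf{1}]^T$ gives $k+c=\beta$ and $1+c(n-1-k)=\beta c$. Eliminating $c$ gives
\[
\beta^2-(n-1)\beta+k(n-1-k)-1=0,
\]
whose discriminant is $(n-1-2k)^2+4$. This yields $\beta_\pm(n,k)$ with $c=\beta_\pm-k$.

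For completeness we count dimensions. The two roots in each quadratic are distinct because the discriminants are positive. Hence for each $\mathbf{u}_j$ the pair of vectors obtained from the two roots is linearly independent. Vectors built from different $\mathbf{u}_j$ are orthogonal blockwise. This gives $2(n-1)+2=2n$ independent eigenvectors, so the list is complete.

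For the ``main'' claim, the $\alpha$-eigenvectors are orthogonal to $\mathbf{1}$. Suppose some $\beta_\pm$ coincided with some $\alpha_\pm(\lambda_j)$. Then its eigenspace would still contain $[\mathbf{1},(\beta-k)\mathbf{1}]^T$, whose coordinate sum is $n(1+\beta-k)$. We need this to be nonzero, i.e.\ $\beta\neq k-1$. Substituting $\beta=k-1$ into the $\beta$-quadratic gives $(k-1)(k-n+1)+k(n-1-k)-1=n-2k$, after simplification. So $\beta_\pm=k-1$ exactly when $n=2k$. In that case $\overline{G}$ is also $k-1$ regular, so $G\overline{G}$ is $k$-regular, with main eigenvalue $k$ only. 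This is the excluded regular case. When $G\overline{G}$ is non-regular, both $\beta_\pm$ are therefore main, and the $\alpha$'s are not.

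The main obstacle is this last step: pinning down exactly when $1+\beta_\pm-k=0$ and tying that condition to regularity of $G\overline{G}$ (which holds iff $n-1-k=k$). A secondary point is the degenerate case where an eigenspace mixes $\alpha$- and $\beta$-vectors. That case only requires noting that main-ness depends on the existence of a single eigenvector not orthogonal to $\mathbf{1}$, which the $\beta$-vector supplies.
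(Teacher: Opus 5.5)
Your verification of (i) and (ii) via $A(\overline{G})=J-I-A(G)$ is correct, and so is the count of $2n$ independent eigenvectors. The paper gives no proof of its own; it quotes the result from Cardoso et al., and this direct block computation is the natural route. Your argument shows that each $\lambda_j$ \emph{contributes} $m_j$ to the multiplicity of $\alpha_\pm(\lambda_j)$, and that is the right reading of the statement. Values can coincide: $\alpha_\pm(\lambda)=\alpha_\pm(-1-\lambda)$, and for $C_5\overline{C_5}$ (the Petersen graph) one even has $\alpha_+(\lambda_j)=\beta_-=1$.

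The gap is in the main-eigenvalue step, where the arithmetic is wrong. At $\beta=k-1$ one has $\beta^2-(n-1)\beta=(k-1)(k-n)$, not $(k-1)(k-n+1)$. The left side of the $\beta$-quadratic therefore equals $n-2k-1$, not $n-2k$. More directly: $1+\beta-k=0$ means $c=-1$, and the bottom-block equation $1+c(n-1-k)=\beta c$ then reads $1-(n-1-k)=1-k$, i.e.\ $n-1-k=k$. So $[\mathbf{1},(\beta_\pm-k)\mathbf{1}]^T\perp\mathbf{1}$ happens exactly when $n=2k+1$.

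Your version places the exceptional case at $n=2k$ and asserts that $G\overline{G}$ is then $k$-regular. That is false: for $n=2k$ the vertices of $G\overline{G}$ have degrees $k+1$ on the $G$ side and $n-k=k$ on the $\overline{G}$ side. For instance, $G=K_2$ gives $G\overline{G}=P_4$, which is non-regular, with both $\beta_\pm=\frac{1\pm\sqrt5}{2}$ main and neither equal to $k-1=0$. As written, your argument would fail to show that $\beta_-$ is main in this non-regular case. It also contradicts your own (correct) parenthetical criterion $n-1-k=k$, which is $n=2k+1$.

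With the corrected computation everything fits. We have $1+\beta_\pm-k=0$ iff $n=2k+1$, iff $\overline{G}$ is $k$-regular, iff $G\overline{G}$ is $(k+1)$-regular. In that case $\beta_\pm=k\pm1$, the $\beta_-$-eigenvector is $[\mathbf{1},-\mathbf{1}]^T$, and the sole main eigenvalue is $k+1$, not $k$ as you wrote. In every non-regular case both $\beta$-vectors have nonzero coordinate sum. Your completeness argument then finishes the proof: an eigenspace containing no $\beta$-vector is spanned by $\alpha$-vectors, all orthogonal to $\mathbf{1}$.
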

The next two results present the Laplacian and signless Laplacian eigenvalues and corresponding eigenvectors of the complementary prism of a graph and a regular graph, respectively. The expression for the Laplacian eigenvector is obtained from the proof of \cite[Theorem 5.2]{card1}.
\begin{thm}\cite{card1}\label{6th2}
    Let $G$ be a graph on $n$ vertices with Laplacian eigenvalues $\lambda_1\geq \lambda_2\geq\cdots\geq \lambda_{n-1}\geq\lambda_n=0.$ For each $j =1, \ldots, n -1,$ if $\u_j$ is an eigenvector for $\lambda_j,$ then
\[\alpha_{\pm}(\lambda_j) = \dfrac{n + 2 \pm
\sqrt{(n - 2\lambda_j)^2+4}}{2}\]
are eigenvalues of $G\overline{G}$ with associated eigenvectors $\tb{
            \u_j,
            \ob{\lambda_j+1-\alpha_{\pm}\ob{\lambda_j}}\u_j}^T.$ The other eigenvalues of $G\overline{G}$ are $2$ and $0$ with associated eigenvectors  $[
            \mathbf{1},
            -\mathbf{1}]^T$ and $[
            \mathbf{1},
            \mathbf{1}]^T,$ respectively.   
\end{thm}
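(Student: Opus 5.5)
The plan is a direct verification from the block form \eqref{6eq3} with $M=L$, so that $\delta=1$ and $\zeta=-1$:
\[
L(G\overline{G})=\begin{bmatrix} L(G)+I & -I\\ -I & L(\overline{G})+I\end{bmatrix}.
\]
The key tool is the identity $L(\overline{G})=nI-J-L(G)$. Since every eigenvector $\u_j$ of $L(G)$ for $\lambda_j$, $j\le n-1$, may be taken orthogonal to $\mathbf{1}$ (as $L(G)$ is symmetric and $\mathbf{1}$ spans, or lies in, the $0$-eigenspace), we get $J\u_j=\mathbf{0}$. Hence
\[
L(\overline{G})\u_j=(n-\lambda_j)\u_j .
\]
When $\lambda_j$ has multiplicity, or when $0$ is repeated because $G$ is disconnected, I would fix an orthonormal eigenbasis $\u_1,\ldots,\u_{n-1}$ of $\mathbf{1}^\perp$ at the outset.

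Next, for a candidate eigenvector $[\u_j, c\u_j]^T$, I would apply the matrix blockwise. The two components are
\[
(\lambda_j+1-c)\u_j \quad\text{and}\quad \bigl(-1+c(n-\lambda_j+1)\bigr)\u_j .
\]
Setting these equal to $\alpha[\u_j, c\u_j]^T$ gives $c=\lambda_j+1-\alpha$, which is exactly the stated eigenvector. Substituting into the second component yields the quadratic
\[
(\lambda_j+1-\alpha)(n-\lambda_j+1-\alpha)=1 .
\]
Writing $x=\alpha$, this becomes $x^2-(n+2)x+(\lambda_j+1)(n-\lambda_j+1)-1=0$. Its discriminant is
\[
(n+2)^2-4(\lambda_j+1)(n-\lambda_j+1)+4=(n-2\lambda_j)^2+4,
\]
which follows by expanding $(\lambda_j+1)(n-\lambda_j+1)=n+1+\lambda_j n-\lambda_j^2$. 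The roots are therefore the stated $\alpha_\pm(\lambda_j)$, and they are distinct since the discriminant is positive.

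For the remaining eigenvalues, I would use $L(G)\mathbf{1}=L(\overline{G})\mathbf{1}=\mathbf{0}$. This gives $L(G\overline{G})[\mathbf{1},\mathbf{1}]^T=\mathbf{0}$ and $L(G\overline{G})[\mathbf{1},-\mathbf{1}]^T=2[\mathbf{1},-\mathbf{1}]^T$.

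The last step, which I expect to be the only mildly delicate point, is completeness. We must show that these $2(n-1)+2=2n$ vectors are linearly independent, so that they exhaust the spectrum with multiplicities. For fixed $j$, the two vectors $[\u_j,c_\pm\u_j]^T$ are independent because $c_+\neq c_-$. Vectors built from different orthonormal $\u_j$ are orthogonal, and all of them are orthogonal to $[\mathbf{1},\pm\mathbf{1}]^T$ because $\u_j\perp\mathbf{1}$. Finally, $[\mathbf{1},\mathbf{1}]^T$ and $[\mathbf{1},-\mathbf{1}]^T$ are orthogonal to each other. This gives a basis of $\mathbb{R}^{2n}$, which completes the argument.
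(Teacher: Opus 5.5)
Your proposal is correct. It is essentially the standard direct verification behind the cited result of \cite{card1}, which the paper quotes without proof, and it is the same as the paper's own quotient-matrix reduction: Lemma~\ref{6l1} with $\w_1=\w_2=\u_j$ gives $H=\begin{bmatrix}\lambda_j+1&-1\\-1&n-\lambda_j+1\end{bmatrix}$, whose eigenvalues are $\alpha_\pm(\lambda_j)$. Your choice of an orthonormal eigenbasis of $\mathbf{1}^\perp$ is genuinely needed when $G$ is disconnected, where the statement as literally worded fails for $\u_{n-1}=\mathbf{1}$, and your orthogonality count correctly supplies the completeness of the spectrum.
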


\begin{thm}\cite{card1}\label{5t5}
    If $G$ is a connected $k$-regular graph on $n$ vertices with $\operatorname{spec}(Q(G))=\cb{2k, \lambda_2^{m_2},\ldots, \lambda_d^{m_d}},$  and $\mathbf{u}_j\perp \mathbf{1}$ is an  eigenvector corresponding to $\lambda_j,$ then the eigenvalues and corresponding eigenvectors of $G\overline{G}$ are as follows.
\begin{enumerate}[label=(\roman*)]
        \item $\alpha_{\pm}(\lambda_j)=\dfrac{n\pm\sqrt{\ob{n-2(\lambda_j+1)}^2+4}}{2},$ each with multiplicity $m_j,$ for $2\leq j\leq d$ with associated eigenvectors $\tb{
            \u_j,
            \ob{\alpha_{\pm}(\lambda_j)-(\lambda_j+1)}\u_j}^T$ which are orthogonal to the all one vector.
        
        \item$\beta_{\pm}(n,k)=n\pm\sqrt{(n-1-2k)^2+1}$ with eigenvectors $\tb{
            \mathbf{1},
            \ob{\beta_{\pm}-2k-1}\mathbf{1}}^T.$ $\beta_{\pm}(n,k)$ are the two main eigenvalues when $G\overline{G}$ is non-regular.

\end{enumerate}
\end{thm}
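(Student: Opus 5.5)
The plan is to verify the eigenpairs directly with the block form \eqref{6eq3} for $M=Q$, where $\delta=\zeta=1$. Then a dimension count shows that the list is complete, and a short computation handles the main-eigenvalue claim. Since $G$ is $k$-regular, $Q(G)=kI+A(G)$ and $\overline{G}$ is $(n-1-k)$-regular with $A(\overline{G})=J-I-A(G)$. Hence
\[
Q(\overline{G})=(n-2-k)I+J-A(G).
\]
If $Q(G)\u_j=\lambda_j\u_j$ with $\u_j\perp\mathbf{1}$, then $A(G)\u_j=(\lambda_j-k)\u_j$ and $J\u_j=\mathbf{0}$. This gives $Q(\overline{G})\u_j=(n-2-\lambda_j)\u_j$. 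Thus $\u_j$ is a common eigenvector of both diagonal blocks, and the $2\times 2$ block problem reduces to a scalar $2\times 2$ problem.

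For part (i), I will look for an eigenvector of the form $[\u_j,x\u_j]^T$. The first block row gives $(\lambda_j+1)+x=\alpha$, so $x=\alpha-(\lambda_j+1)$. The second block row gives $1+x(n-1-\lambda_j)=\alpha x$. Substituting $x$ yields
\[
\alpha^2-n\alpha+(\lambda_j+1)(n-1-\lambda_j)-1=0.
\]
Its discriminant is $n^2-4(\lambda_j+1)(n-1-\lambda_j)+4$, which equals $(n-2(\lambda_j+1))^2+4$ after expanding $(n-2(\lambda_j+1))^2$. This gives $\alpha_\pm(\lambda_j)$ and the stated eigenvectors, which are clearly orthogonal to $\mathbf{1}$. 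Taking an orthonormal basis of each eigenspace within $\mathbf{1}^\perp$ gives multiplicity $m_j$ for each of $\alpha_+(\lambda_j)$ and $\alpha_-(\lambda_j)$.

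For part (ii), I use $Q(G)\mathbf{1}=2k\mathbf{1}$ and $Q(\overline{G})\mathbf{1}=2(n-1-k)\mathbf{1}$. The ansatz $[\mathbf{1},x\mathbf{1}]^T$ gives $x=\beta-2k-1$ and
\[
\beta^2-2n\beta+(2k+1)(2n-2k-1)-1=0.
\]
Writing $m=n-1-2k$, we have $(2k+1)(2n-2k-1)=(n-m)(n+m)=n^2-m^2$. So $\beta_\pm=n\pm\sqrt{m^2+1}$, as claimed.

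Completeness is the step I expect to need the most care. For a fixed $\u_j$, the two vectors $[\u_j,x_\pm\u_j]^T$ are linearly independent because the discriminant is positive, so $\alpha_+\neq\alpha_-$ and $x_+\neq x_-$. Vectors built from different $\u_j$, or from $\mathbf{1}$ versus $\mathbf{1}^\perp$, are independent blockwise. This yields $2(n-1)+2=2n$ independent eigenvectors, so the list is the full spectrum.

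For the main-eigenvalue claim, the eigenvectors in (i) are orthogonal to $\mathbf{1}$, so those eigenvalues are non-main. Any eigenvalue coinciding with some $\beta_\pm$ is main, because the $\beta_\pm$-eigenvector has inner product $n(1+x)=n(\beta_\pm-2k)$ with $\mathbf{1}$. Plugging $\beta=2k$ into the quadratic gives $(n-2k)^2-m^2-1=(m+1)^2-m^2-1=2m$. This vanishes exactly when $k=(n-1)/2$, which is precisely when $G\overline{G}$ is regular. So in the non-regular case $\beta_\pm\neq 2k$, and both $\beta_\pm$ are main eigenvalues.
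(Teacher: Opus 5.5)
Your argument is correct. The paper gives no proof of this statement of its own; it quotes the result from \cite{card1}. So the comparison is between a citation and your self-contained verification.

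Your reduction is the quotient mechanism of Lemma~\ref{6l1} that the paper uses in Theorem~\ref{6t5}. With $\w_1=\w_2=\u_j$ one gets $H=\begin{bmatrix}\lambda_j+1&1\\1&n-1-\lambda_j\end{bmatrix}$, and with $\w_1=\w_2=\frac{1}{\sqrt n}\mathbf{1}$ one gets $H=\begin{bmatrix}2k+1&1\\1&2n-2k-1\end{bmatrix}$. Your quadratics are exactly the characteristic polynomials of these two matrices. Your route adds two things the lemma does not give, since the lemma only produces invariant subspaces. First, the $2n$-count shows that these quotients exhaust the spectrum. Second, the check that $\beta=2k$ is a root exactly when $n=2k+1$ settles the main-eigenvalue claim.

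Two small points should be made explicit. First, the count $2(n-1)+2=2n$ relies on $2k$ being a simple eigenvalue of $Q(G)$, and so does the fact that every $\lambda_j$-eigenspace ($j\ge 2$) lies in $\mathbf{1}^\perp$. This is where the connectedness of $G$ enters, and your write-up never says so.

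Second, ``multiplicity $m_j$'' holds only in the multiset sense. Since $\alpha_\pm(\lambda)=\alpha_\pm(n-2-\lambda)$, values can coincide. For $G=C_4$, both $\lambda=2$ and $\lambda=0$ give $2\pm\sqrt{2}$, each with total multiplicity $3$. Your completeness argument proves exactly this multiset version. Your main-eigenvalue argument also survives coincidences with $\beta_\pm$: the eigenspace of any eigenvalue is spanned by the members of your eigenbasis having that eigenvalue.
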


 \section{Block matrices and fractional revival}\label{6s3}
Unlike the complementary prism, several graph operations, including the join, blow-up, and Cartesian product of graphs, are associated with matrices having a block structure. These constructions motivate the development of a general framework for studying fractional revival in real symmetric matrices with a block structure. This framework builds on the concept of quotient matrices studied in the context of $s$-pair state transfer \cite{kim}.

\begin{lem}\label{6l1}
    Let \[M=\begin{bmatrix}
        M_1 & C_{12} & C_{13} &\cdots & C_{1k}\\
        C_{12}^T & M_2 & C_{23} &\cdots & C_{2k}\\ 
        C_{13}^T & C_{23}^T & M_3  &\cdots & C_{3k}\\
        \vdots & \vdots & \vdots & \ddots & \vdots\\
         C_{1k}^T & C_{2k}^T & C_{3k}^T & \cdots & M_k
        \end{bmatrix}\] be a real symmetric matrix, where $M_j$ is an $n_j\times n_j$ symmetric matrix and $C_{ij}$ is a matrix of appropriate order. Suppose there exist nonzero vectors $\w_j\in\mathbb{R}^{n_j},$ $1\leq j\leq k,$ such that 
        \[M_j\w_j=\lambda_j\w_j, \quad   C_{ij}\w_j=\alpha_{ij}\w_i, \quad \text{and}\quad C_{ij}^T\w_i=\beta_{ij}\w_j  \quad \text{for all}\quad 1\leq i< j\leq k.    \]
        Define $\u_j=[0,\ldots,0,\w_j,0,\ldots,0]^T,$ where $\w_j$ occupies the $jth$ block.
        Then the following holds.
        \begin{enumerate}
            \item $W=span\{\u_1,\u_2,\ldots,\u_k\}$ is an invariant subspace of $M.$ 
            \item If $\mathcal{Q}=[\u_1~~ \u_2~~ \cdots~~ \u_k],$ then $M\mathcal{Q}=\mathcal{Q}H,$ where $H=(h_{ij})_{i,j=1}^k$ and  \[h_{ij}=\left\{ \begin{array}{rcl}
            
 \alpha_{ij}, & \text{if\quad $i<j$,} \\
 \lambda_j, & \text{if \quad$i=j$,}\\

 \beta_{ji}, & \text{if \quad $i>j.$}
 \end{array}\right.\]
        \end{enumerate}
\end{lem}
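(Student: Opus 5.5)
The plan is to compute $M\u_j$ block by block for each fixed $j$ and show it equals $\sum_{i=1}^k h_{ij}\u_i$. That single identity is exactly the $j$th column of the matrix equation $M\mathcal{Q}=\mathcal{Q}H$, so part 2 follows at once. Part 1 then follows as well: each $M\u_j$ lies in $W$, and by linearity $MW\subseteq W$.

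To carry this out, write $\u_j$ with $\w_j$ in the $j$th block and zeros elsewhere. The $i$th block of $M\u_j$ is just the $(i,j)$ block of $M$ applied to $\w_j$. There are three cases.

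\begin{enumerate}
\item If $i=j$, the block is $M_j\w_j=\lambda_j\w_j$.
\item If $i<j$, the $(i,j)$ block is $C_{ij}$, so we get $C_{ij}\w_j=\alpha_{ij}\w_i$.
\item If $i>j$, the $(i,j)$ block is $C_{ji}^T$. The hypothesis $C_{ji}^T\w_j=\beta_{ji}\w_i$ applies here, with the pair $j<i$ in the roles of $(i,j)$ in the statement, and gives $\beta_{ji}\w_i$.
\end{enumerate}

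Hence
\[M\u_j=\sum_{i<j}\alpha_{ij}\u_i+\lambda_j\u_j+\sum_{i>j}\beta_{ji}\u_i=\sum_{i=1}^k h_{ij}\u_i,\]
which matches the definition of $h_{ij}$.

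Finally, since the $j$th column of $\mathcal{Q}H$ is $\sum_i h_{ij}\u_i$ and the $j$th column of $M\mathcal{Q}$ is $M\u_j$, we obtain $M\mathcal{Q}=\mathcal{Q}H$. The $\u_j$ have disjoint supports and nonzero $\w_j$, so they are linearly independent and $W$ has dimension $k$, though this is not needed for invariance.

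There is no real obstacle here. The only point requiring care is the index bookkeeping in the case $i>j$: the hypothesis must be read with the roles of $i$ and $j$ swapped, which is precisely why $h_{ij}=\beta_{ji}$ rather than $\beta_{ij}$.
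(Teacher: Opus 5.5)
Your proposal is correct and takes the same approach as the paper. The paper also computes the $i$th block of $M\u_j$ in the cases $i<j$, $i=j$, $i>j$, obtains $M\u_j=\sum_{i<j}\alpha_{ij}\u_i+\lambda_j\u_j+\sum_{i>j}\beta_{ji}\u_i$, and reads this as the $j$th column of $M\mathcal{Q}=\mathcal{Q}H$; your handling of the index swap for $i>j$ is right.
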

\begin{proof}
   For each $j,$ the $ith$ block of $M\u_j$ is given by
   \[(M\u_j)_i=\left\{ \begin{array}{rcl}
 C_{ij}\w_j, & \text{if} \quad i<j,\\ 
 M_j\w_j, & \text{if} \quad i=j, \\
 C_{ji}^T\w_j, & \text{if} \quad i>j.
 \end{array}\right.\]
 Hence, $M\u_j=\displaystyle\sum_{i<j}\alpha_{ij}\u_i+\lambda_j\u_j+\sum_{i>j}\beta_{ji}\u_i,$ which proves the first part. If $\h_j$
denotes the $jth$ column of $H,$ then the expression for $M\u_j$ shows that
$M\u_j=\mathcal{Q}\h_j,$
for every $j,$ and hence $M\mathcal{Q}=\mathcal{Q}H.$
 \end{proof}

If all $\w_j$'s in Lemma \ref{6l1} are unit vectors, then $\alpha_{ij}=\beta_{ij},$ and we have the following observation.

\begin{thm}\label{6t1}
    Suppose the premise of Lemma \ref{6l1} holds with each $\u_j$ being a unit vector. Then the graph associated with matrix $H$ exhibits fractional revival from vertex $a$ to $b$ if and only if the graph associated with matrix $M$ exhibits fractional revival from $\u_a$ to $\u_b.$ 
\end{thm}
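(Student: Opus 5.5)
The plan is to transport the quantum walk from $H$ to $M$ through the intertwining relation $M\mathcal{Q}=\mathcal{Q}H$ of Lemma \ref{6l1}.

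\textbf{Step 1: $H$ is symmetric and $\mathcal{Q}$ is an isometry.} Since each $\w_j$ is a unit vector, for $i<j$ we have
\[
\alpha_{ij}=\w_i^TC_{ij}\w_j=(C_{ij}^T\w_i)^T\w_j=\beta_{ij}.
\]
Hence $h_{ij}=h_{ji}$, so $H$ is real symmetric and is the matrix of a weighted graph on the vertex set $\{1,\ldots,k\}$. The vectors $\u_1,\ldots,\u_k$ are unit vectors supported on disjoint blocks, so they are orthonormal. Therefore $\mathcal{Q}^T\mathcal{Q}=I_k$ and $\mathcal{Q}\e_a=\u_a$, where $\e_a$ is the $a$th standard basis vector of $\mathbb{R}^k$.

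\textbf{Step 2: intertwining the walks.} From $M\mathcal{Q}=\mathcal{Q}H$, induction gives $M^m\mathcal{Q}=\mathcal{Q}H^m$ for all $m\ge 0$. Summing the exponential series termwise then gives
\[
U_M(t)\mathcal{Q}=\mathcal{Q}\,U_H(t)\quad\text{for all } t\in\mathbb{R}.
\]
Applying this to $\e_a$ yields the key identity
\[
U_M(t)\u_a=\mathcal{Q}\,U_H(t)\e_a .
\]

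\textbf{Step 3: both directions.} Suppose $U_H(\tau)\e_a=\alpha\e_a+\beta\e_b$ with $\beta\neq0$. Applying $\mathcal{Q}$ and using the key identity gives $U_M(\tau)\u_a=\alpha\u_a+\beta\u_b$, which is fractional revival from $\u_a$ to $\u_b$. For the converse, suppose $U_M(\tau)\u_a=\alpha\u_a+\beta\u_b$ with $\beta\ne0$. Then $\mathcal{Q}\,U_H(\tau)\e_a=\mathcal{Q}(\alpha\e_a+\beta\e_b)$. Since $\mathcal{Q}$ has orthonormal columns, it is injective; concretely, multiply on the left by $\mathcal{Q}^T$. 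This gives $U_H(\tau)\e_a=\alpha\e_a+\beta\e_b$. Linear independence transfers in both directions for the same reason: $\u_a,\u_b$ are independent exactly when $a\ne b$, exactly when $\e_a,\e_b$ are independent.

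\textbf{Main point needing care.} The only real subtlety is in Step 1: making sure $H$ is symmetric, so that "the graph associated with $H$" makes sense, and that $\mathcal{Q}$ is injective. Both follow from the unit-norm hypothesis and the disjoint block supports. Everything else is the formal consequence of the invariant subspace $W$ from Lemma \ref{6l1}.
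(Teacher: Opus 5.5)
Your proof is correct and follows the paper's argument: from $M\mathcal{Q}=\mathcal{Q}H$ you get $U_M(t)\mathcal{Q}=\mathcal{Q}U_H(t)$, and then $\mathcal{Q}\e_a=\u_a$ together with $\mathcal{Q}^T\mathcal{Q}=I$ transfers fractional revival in both directions. Your explicit checks that $\alpha_{ij}=\beta_{ij}$ (so $H$ is symmetric) and that linear independence of the two states carries over are details the paper asserts or leaves implicit.
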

\begin{proof}
   By Lemma \ref{6l1}(2), we have  $M\mathcal{Q}=\mathcal{Q}H,$ which implies $M^k\mathcal{Q}=\mathcal{Q}H^k$ for all $k\geq 0.$ Therefore, for all $t\in\mathbb{R},$ we have $U_M(t)\mathcal{Q}=\mathcal{Q}U_H(t).$ Since $\mathcal{Q}\e_a=\u_a,$ we obtain $U_M(t)\u_a=U_M(t)\mathcal{Q}\e_a=\mathcal{Q}U_H(t)\e_a.$ As $\mathcal{Q}^T\mathcal{Q}=I,$ it follows that, for some complex scalars $\alpha, \beta$ with $\beta\neq 0,$ $U_H(t)\e_a=\alpha\e_a+\beta\e_b$ if and only if $U_M(t)\u_a=\alpha\u_a+\beta\u_b.$  
\end{proof}
The following result shows that the characterization of fractional revival in Theorem \ref{6t1} is preserved under a suitable block matrix extension.
\begin{thm}\label{6t4}
    Suppose the premise of Theorem \ref{6t1} holds. Let 
    \[M'=\begin{bmatrix}
        M & B\\
        B^T & F
    \end{bmatrix}\quad\text{with} \quad B^T\u_j=\mathbf{0}, ~~\text{for all}~~1\leq j \leq k,\]
    and $F$ is a real symmetric matrix of appropriate order.
    Then the graph associated with matrix $H$ exhibits fractional revival from vertex $a$ to $b$ if and only if  the graph associated with matrix $M'$ admits fractional revival from $[\u_a,\mathbf{0}]^T$ to $[\u_b,\mathbf{0}]^T.$
\end{thm}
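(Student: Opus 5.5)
The plan is to reduce to Theorem \ref{6t1} by showing that the padded vectors $\u_j'=[\u_j,\mathbf{0}]^T$ satisfy the same intertwining relation for $M'$ that the $\u_j$ satisfy for $M$.

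First, set $\mathcal{Q}'=[\u_1'~~\u_2'~~\cdots~~\u_k']$, that is, $\mathcal{Q}'=\begin{bmatrix}\mathcal{Q}\\ \mathbf{0}\end{bmatrix}$ with $\mathcal{Q}$ as in Lemma \ref{6l1}. By block multiplication,
\[
M'\mathcal{Q}'=\begin{bmatrix} M\mathcal{Q}\\ B^T\mathcal{Q}\end{bmatrix}.
\]
The hypothesis $B^T\u_j=\mathbf{0}$ for every $j$ gives $B^T\mathcal{Q}=\mathbf{0}$. Lemma \ref{6l1}(2) gives $M\mathcal{Q}=\mathcal{Q}H$. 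Together these yield $M'\mathcal{Q}'=\mathcal{Q}'H$, with the same matrix $H$ as before. Since each $\u_j$ is a unit vector, $H$ is symmetric (because $\alpha_{ij}=\beta_{ij}$), so $U_H(t)$ is defined as a quantum walk on the graph of $H$.

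Next, I repeat the argument of Theorem \ref{6t1}. Induction on the power gives $M'^{\,m}\mathcal{Q}'=\mathcal{Q}'H^m$ for all $m\ge 0$. Summing the exponential series then gives $U_{M'}(t)\mathcal{Q}'=\mathcal{Q}'U_H(t)$ for all real $t$. Since $\mathcal{Q}'\e_a=[\u_a,\mathbf{0}]^T$, we obtain
\[
U_{M'}(t)[\u_a,\mathbf{0}]^T=\mathcal{Q}'U_H(t)\e_a .
\]

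Finally, I use that $\mathcal{Q}'^T\mathcal{Q}'=\mathcal{Q}^T\mathcal{Q}=I$, because the $\u_j$ have disjoint block supports and unit norm. Thus $\mathcal{Q}'$ is an isometric injection of $\mathbb{C}^k$ into the ambient space, and it maps $\e_a,\e_b$ to $[\u_a,\mathbf{0}]^T,[\u_b,\mathbf{0}]^T$. For the forward direction, apply $\mathcal{Q}'$ to $U_H(\tau)\e_a=\alpha\e_a+\beta\e_b$. For the converse, suppose $U_{M'}(\tau)[\u_a,\mathbf{0}]^T=\alpha[\u_a,\mathbf{0}]^T+\beta[\u_b,\mathbf{0}]^T$. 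Then $\mathcal{Q}'\bigl(U_H(\tau)\e_a-\alpha\e_a-\beta\e_b\bigr)=0$, and left-multiplying by $\mathcal{Q}'^T$ recovers $U_H(\tau)\e_a=\alpha\e_a+\beta\e_b$. The same $\tau,\alpha,\beta$ (with $\beta\ne0$) work in both directions.

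Linear independence is preserved both ways: $\e_a,\e_b$ are independent exactly when $a\ne b$, and then $[\u_a,\mathbf{0}]^T,[\u_b,\mathbf{0}]^T$ are independent because they are orthonormal.

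The only real content is verifying $B^T\mathcal{Q}=\mathbf{0}$, i.e. that the subspace $W'=\mathcal{Q}'\mathbb{R}^k$ stays $M'$-invariant even though $F$ is arbitrary. That check is immediate from the hypothesis, and everything else mirrors Theorem \ref{6t1}.
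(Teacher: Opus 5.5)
Your proposal is correct and follows essentially the same route as the paper. Both proofs use $B^T\u_j=\mathbf{0}$ together with Lemma~\ref{6l1}(2) to obtain $M'\mathcal{Q}'=\mathcal{Q}'H$ for the zero-padded $\mathcal{Q}'$, and then run the intertwining argument of Theorem~\ref{6t1} with $\mathcal{Q}'^T\mathcal{Q}'=I$. Re-running that argument explicitly, rather than citing Theorem~\ref{6t1} as the paper does, is slightly more careful, since $M'$ itself need not have the block form required by Lemma~\ref{6l1}.
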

\begin{proof}
Since $B^T\u_j=\mathbf{0}$ for all $1\leq j\leq k$, we have
\[
M'\begin{bmatrix}
\u_j\\
\mathbf{0}
\end{bmatrix}=
\begin{bmatrix}
M & B\\
B^T & F
\end{bmatrix}
\begin{bmatrix}
\u_j\\
\mathbf{0}
\end{bmatrix}
=
\begin{bmatrix}
M\u_j\\
\mathbf{0}
\end{bmatrix}.
\]
Considering $\mathcal{Q'}=\begin{bmatrix}
    \mathcal{Q}\\ \mathbf{0}
\end{bmatrix},$ it follows that
% [\mathcal{Q},\mathbf{0}]^T,$
$
M'\mathcal{Q'}=
\begin{bmatrix}
M\mathcal{Q}\\
\mathbf{0}
\end{bmatrix}
=
\begin{bmatrix}
\mathcal{Q}H\\
\mathbf{0}
\end{bmatrix}
=\mathcal{Q'}H.$
Hence, the desired conclusion follows by applying Theorem~\ref{6t1} to $M',$ $\mathcal{Q'},$ and $H.$ 
\end{proof}

\begin{cor}\label{6c2}
Suppose the premise of Theorem~\ref{6t1} holds for $M\in\{A,L,Q\}$ associated with a graph $G.$ Define 
\[S=\{v\in V(G) \mid (\u_j)_v=0\quad\text{for all}\quad  1\leq j\leq k\}.\]
If $G$ is an induced subgraph of a graph $X$ such that every vertex of $G$ having a neighbour in $V(X)\backslash V(G)$ belongs to $S,$ then 
$G$ admits fractional revival from $\u_a$ to $\u_b$ if and only if the graph $X$ exhibits fractional revival from $[\u_a,\mathbf{0}]^T$ to $[\u_b,\mathbf{0}]^T.$
\end{cor}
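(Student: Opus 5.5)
The plan is to reduce the statement to Theorem~\ref{6t4} by writing $M(X)$ in block form with the vertices of $G$ listed first. The only real obstacle is that for $M\in\{L,Q\}$ the upper-left block of $M(X)$ is not $M(G)$ itself. I will handle this using the hypothesis that boundary vertices lie in $S$.

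\textbf{Step 1: block form of $M(X)$.} Order $V(X)$ so that the vertices of $G$ come first. Since $G$ is an induced subgraph of $X$, we get
\[
M(X)=\begin{bmatrix} M(G)+\epsilon D_{\mathrm{ext}} & B\\ B^T & F\end{bmatrix}.
\]
Here $\epsilon=0$ if $M=A$ and $\epsilon=1$ if $M\in\{L,Q\}$. The matrix $D_{\mathrm{ext}}$ is diagonal, and its $(v,v)$ entry is the number of neighbours of $v$ in $V(X)\backslash V(G)$. The block $B$ records the edges between $V(G)$ and $V(X)\backslash V(G)$, with sign $\pm1$ according to $M$. The block $F$ is real symmetric.

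\textbf{Step 2: use the hypothesis on boundary vertices.} By hypothesis, a vertex $v\in V(G)$ with a neighbour outside $G$ lies in $S$. So every nonzero row of $B$, and every nonzero diagonal entry of $D_{\mathrm{ext}}$, is indexed by a vertex of $S$. Each $\u_j$ vanishes on $S$, so $(B^T\u_j)_w=\sum_{v}B_{vw}(\u_j)_v=0$ for every $w$. Hence $B^T\u_j=\mathbf{0}$, and likewise $D_{\mathrm{ext}}\u_j=\mathbf{0}$. It follows that $(M(G)+\epsilon D_{\mathrm{ext}})\mathcal{Q}=M(G)\mathcal{Q}=\mathcal{Q}H$, using Lemma~\ref{6l1}(2) for $M(G)$.

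\textbf{Step 3: apply the earlier theorems.} Set $\mathcal{Q}'=\begin{bmatrix}\mathcal{Q}\\ \mathbf{0}\end{bmatrix}$. Steps 1 and 2 give $M(X)\mathcal{Q}'=\mathcal{Q}'H$. This is exactly the identity used in the proof of Theorem~\ref{6t4}, and $\mathcal{Q}'$ still has orthonormal columns. The argument of Theorem~\ref{6t1} then shows that $X$ exhibits fractional revival from $[\u_a,\mathbf{0}]^T$ to $[\u_b,\mathbf{0}]^T$ if and only if $H$ exhibits fractional revival from $a$ to $b$. Theorem~\ref{6t1} applied directly to $M(G)$ shows that this happens if and only if $G$ admits fractional revival from $\u_a$ to $\u_b$. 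Chaining the two equivalences gives the corollary.

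The one point to state carefully is that Theorem~\ref{6t4} assumes its upper-left block is the matrix $M$ of Theorem~\ref{6t1}. Here that block is $M(G)+\epsilon D_{\mathrm{ext}}$, so I will not cite Theorem~\ref{6t4} verbatim. Instead I rely on the intertwining relation $M(X)\mathcal{Q}'=\mathcal{Q}'H$ from Step 2, which is all that its proof actually uses.
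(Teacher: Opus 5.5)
Your proposal is correct and follows essentially the same route as the paper. Both write $M(X)$ in block form with the extra diagonal term $\delta D'$ of external degrees, use the vanishing of each $\u_j$ on $S$ to get $D'\u_j=\mathbf{0}$ and $B^T\u_j=\mathbf{0}$, and conclude via Theorem~\ref{6t4} chained with Theorem~\ref{6t1}. Your caution about citing Theorem~\ref{6t4} verbatim is harmless but not needed: $D'\u_j=\mathbf{0}$ means the premise of Theorem~\ref{6t1} also holds for $M+\delta D'$ with the same $H$, which is how the paper applies it.
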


\begin{proof}
Let the matrix associated with $X$ be
    \[M'=\begin{bmatrix}
        M+\delta D' & B\\
        B^T & F
    \end{bmatrix},\] where $D'$ is the diagonal matrix defined by $D'_{a,a}=\operatorname{deg}_X(a)-\operatorname{deg}_G(a)$ for every $a\in V(G),$ and $\delta=0$ if $M=A$ and $1$ otherwise. Since $(\u_j)_v=0$ for all $v\in S,$ it follows that $D'\u_j=\mathbf{0}$ and $
B^T\u_j=\mathbf{0},$ for $1\leq j\leq k.$ Therefore, \[M'\begin{bmatrix}
\u_j\\
\mathbf{0}
\end{bmatrix}=
\begin{bmatrix}
M\u_j\\
\mathbf{0}
\end{bmatrix},\]
and the conclusion now follows from Theorem \ref{6t4}.
\end{proof}

\begin{figure}
\centering
%%%%%%%%%%%%%%%%%%%%%%%%%%%%%%%%%%%%%%%%%%%%%%%%%%%%%%%%%%%%%%

\centering
\begin{tikzpicture}[scale=0.7]

\tikzset{
vertex/.style={
circle,
draw,
thick,
fill=white,
minimum size=3.5mm,
inner sep=0pt,
font=\scriptsize
}
}

\node[vertex] (4) at (-1,0) {$4$};
 \node[vertex] (8) at (5,0) {$8$};

\node[vertex] (3) at (-2,1) {$3$};
\node[vertex] (2) at (-3,2) {$2$};
\node[vertex] (1) at (-4,3) {$1$};
\draw[thick] (1)--(2)--(3)--(4);

\node[vertex,minimum size=2.5mm] (12) at (-2,0.4) {};
\node[vertex,minimum size=2.5mm] (13) at (-3,0.8) {};
\node[vertex,minimum size=2.5mm] (14) at (-4,1.2) {};
\draw[thick] (4)--(12)--(13)--(14);

\fill (-3,0.1) circle (1.5pt);
\fill (-3,-0.3) circle (1.5pt);
\fill (-3,-0.7) circle (1.5pt);

\node[vertex,minimum size=2.5mm] (15) at (-2,-0.8) {};
\node[vertex,minimum size=2.5mm] (16) at (-3,-1.6) {};
\node[vertex,minimum size=2.5mm] (17) at (-4,-2.4) {};
\draw[thick] (4)--(15)--(16)--(17);

\node[vertex] (9) at (6,1) {$9$};
\node[vertex] (10) at (7,2) {$10$};
\node[vertex] (11) at (8,3) {$11$};
\draw[thick] (8)--(9)--(10)--(11);

\node[vertex,minimum size=2.5mm] (18) at (6,0.4) {};
\node[vertex,minimum size=2.5mm] (19) at (7,0.8) {};
\node[vertex,minimum size=2.5mm] (20) at (8,1.2) {};
\draw[thick] (8)--(18)--(19)--(20);

\fill (7,0.1) circle (1.5pt);
\fill (7,-0.3) circle (1.5pt);
\fill (7,-0.7) circle (1.5pt);

\node[vertex,minimum size=2.5mm] (21) at (6,-0.8) {};
\node[vertex,minimum size=2.5mm] (22) at (7,-1.6) {};
\node[vertex,minimum size=2.5mm] (23) at (8,-2.4) {};
\draw[thick] (8)--(21)--(22)--(23);

    \node[vertex,minimum size=2.5mm] (24) at (0.5,1) {};
    \node[vertex,minimum size=2.5mm] (26) at (3.5,1) {};
    \draw[thick] (4)--(24);
    \draw[thick] (26)--(8);
\draw[thick, dashed] (24)--(26);

\node[vertex] (27) at (0.5,-1) {$5$};
\node[vertex] (28) at (2,-1){$6$};
\node[vertex] (29) at (3.5,-1) {$7$};
\draw[thick] (4)--(27)--(28)--(29)--(8);

\end{tikzpicture}
\caption{$P_{11}$
with disjoint copies of $P_3$
 attached to vertices $4$ and $8,$ respectively, by an edge incident with an end vertex of each copy, together with an additional path of arbitrary length joining $4$ and $8.$}
 
\label{6fig1}
\end{figure}

\begin{exm}\label{6ex1}
Consider the induced path $P_{11}$ with vertex set $\{j\in\mathbb{N} \mid 1\leq j\leq 11\}$ as shown in Figure \ref{6fig1}, and let $M=A.$ With respect to the vertex ordering $(1,7,9,8,2,6,10,3,5,11,4),$ let $M_1,M_2,$ and $M_3$
denote the principal blocks of orders $4,3,$ and $4,$ respectively as in Lemma \ref{6l1}. Let $\w_1=\w_3=\frac{1}{\sqrt{3}}[1,-1,1,0]^T,$ and $\w_2=\frac{1}{\sqrt{3}}[1,-1,1]^T.$ A direct calculation gives $M_1\w_1=M_2\w_2=M_3\w_3=\mathbf{0},$ $C_{12}\w_2=\w_1,$ $C_{13}\w_3=\mathbf{0},$ and $C_{23}\w_3=\w_2.$ Therefore, all the hypotheses of Lemma \ref{6l1} holds with $\u_1=\frac{1}{\sqrt{3}}\ob{\e_1-\e_7+\e_9},$ $\u_2=\frac{1}{\sqrt{3}}\ob{\e_2-\e_6+\e_{10}},$ and $\u_3=\frac{1}{\sqrt{3}}\ob{\e_3-\e_5+\e_{11}},$ and we obtain 
\[H=\begin{bmatrix}
    0 & 1 & 0\\1 & 0 & 1\\0 & 1 & 0
\end{bmatrix},\]
which is the adjacency matrix of $P_3.$ Since $P_3$ admits PST between the vertices $1$ and $3$ at $\frac{\pi}{\sqrt{2}},$ by Theorem \ref{6t1}, the path $P_{11}$
admits PST between $\frac{1}{\sqrt{3}}\ob{\e_1-\e_7+\e_9}$ and  $\frac{1}{\sqrt{3}}\ob{\e_3-\e_5+\e_{11}}$ at the same time, which is consistent with  \cite[Example 7.3(2)]{god7}. 
Observe that the vectors  $\u_1,\u_2,$ and $\u_3$ have zero entries at vertices $4$ and $8.$ If we add an edge between $4$  and $8$ in $P_{11},$ the same vectors $\w_1,\w_2,$ and $\w_3$ satisfy the hypotheses of Lemma \ref{6l1}, with the same matrix $H.$ Hence, the resulting graph also admits PST between $\frac{1}{\sqrt{3}}\ob{\e_1-\e_7+\e_9}$ and  $\frac{1}{\sqrt{3}}\ob{\e_3-\e_5+\e_{11}}.$

Corollary \ref{6c2} implies that attaching disjoint copies of $P_3$ to vertices $4$ and $8$ by an edge incident with an end vertex of each copy,  preserves PST between
$
\frac{1}{\sqrt{3}}(\e_1-\e_7+\e_9)$
and 
$\frac{1}{\sqrt{3}}(\e_3-\e_5+\e_{11})$
in the resulting graph. Moreover, it follows from Corollary \ref{6c2} that the graph in Figure \ref{6fig1} exhibits PST between $\frac{1}{\sqrt{3}}\ob{\e_1-\e_7+\e_9}$ and  $\frac{1}{\sqrt{3}}\ob{\e_3-\e_5+\e_{11}}.$
\end{exm}
The observations in Example \ref{6ex1} lead to the following result, where an $(m,L)$-state in a graph is a linear combination $l_1\e_{j_1}+l_2\e_{j_2}+\cdots +l_m\e_{j_m}$ of $m$ vertex states, with $L=(l_1,l_2,\ldots,l_m)\in \mathbb{C}^m$ satisfying $\sum_{k=1}^m|l_k|^2=1$ \cite[Section 5]{pal10}.
\begin{cor}\label{6cor3}
   let $a$ and $b$ be two vertices in a cycle  $C_n$ with $n\geq5,$  that are joined by a path of length four. Suppose that two disjoint copies of $P_3$ are connected to $a$ and $b,$ respectively, by an edge joining each of $a$ and $b$ to an end vertex of the corresponding copy of $P_3.$ Then the resulting graph exhibits perfect $(m,L)$-state transfer with respect to the adjacency matrix where $m=3$ and $L=\ob{\frac{1}{\sqrt{3}},-\frac{1}{\sqrt{3}},\frac{1}{\sqrt{3}}}.$
\end{cor}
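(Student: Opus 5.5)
The plan is to reduce the statement to Example \ref{6ex1} and Corollary \ref{6c2}. The key observation is that the cycle $C_n$, together with the two pendant copies of $P_3$, contains an induced copy of the configuration in Figure \ref{6fig1}. Label the path of length four from $a$ to $b$ as $a=4,5,6,7,8=b$. Label the copy of $P_3$ attached at $a$ as $3,2,1$, with $3$ adjacent to $4$, and the copy attached at $b$ as $9,10,11$, with $9$ adjacent to $8$. The vertices $1,\ldots,11$ then induce exactly the path $P_{11}$ of Example \ref{6ex1}, plus possibly the edge $48$ when $n=5$. The remaining $n-5$ vertices of $C_n$ form the complementary arc, a path joining $4$ and $8$; when $n=6$ this arc is a single vertex. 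Since $n\geq 5$, the path $5,6,7$ is one of the two arcs of the cycle between $a$ and $b$, so $4$ and $8$ are the only vertices of $P_{11}$ with neighbours outside it. When $n=5$ there are no outside vertices, and the extra edge $48$ is present.

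With $\w_1,\w_2,\w_3$ and $H$ the adjacency matrix of $P_3$ as in Example \ref{6ex1}, the hypotheses of Lemma \ref{6l1} and Theorem \ref{6t1} hold for $M=A(P_{11})$. The vectors $\u_1,\u_2,\u_3$ all vanish at vertices $4$ and $8$, so both lie in the set $S$ of Corollary \ref{6c2}.

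Case $n=5$: I invoke the remark in Example \ref{6ex1} that adding the edge $48$ leaves the hypotheses of Lemma \ref{6l1} intact with the same $H$. This holds because that edge only modifies the blocks in positions where $\u_j$ vanishes. Theorem \ref{6t1} then gives the conclusion directly.

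Case $n\geq 6$: take $G=P_{11}$ as an induced subgraph of the resulting graph $X$. Every vertex of $G$ with a neighbour in $V(X)\setminus V(G)$ lies in $\{4,8\}\subseteq S$. By Corollary \ref{6c2}, fractional revival (in particular PST) in $P_{11}$ from $\u_1$ to $\u_3$ transfers to $X$. PST holds in $P_{11}$ at time $\pi/\sqrt{2}$ by Theorem \ref{6t1}, since $P_3$ has PST between its end vertices at that time. Hence $X$ has PST between $\frac{1}{\sqrt{3}}(\e_1-\e_7+\e_9)$ and $\frac{1}{\sqrt{3}}(\e_3-\e_5+\e_{11})$.

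These states are $(3,L)$-states with $L=\left(\frac{1}{\sqrt3},-\frac{1}{\sqrt3},\frac{1}{\sqrt3}\right)$, since the coefficients have squared moduli summing to $1$. This is exactly perfect $(m,L)$-state transfer with $m=3$.

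The main obstacle is the bookkeeping that the labelled subgraph really is induced and equal to $P_{11}$, and that no vertex outside $\{4,8\}$ has external neighbours. The pendant $P_3$'s attach only through their end vertices $3$ and $9$ to $a$ and $b$, and the cycle's other arc meets the path only at $a$ and $b$. I would also check the small cases $n=5$ and $n=6$ separately, as above.
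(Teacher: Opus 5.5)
Your proposal is correct and takes essentially the paper's route. The paper derives this corollary directly from the observations in Example~\ref{6ex1}. You locate that example's $P_{11}$ inside the graph, handle $n=5$ through the remark that the edge $\{4,8\}$ leaves $H$ unchanged, and handle $n\ge 6$ through Corollary~\ref{6c2} with $S=\{4,8\}$, so that the PST of $P_3$ at $\pi/\sqrt{2}$ transfers. One point to make explicit: Corollary~\ref{6c2} is stated for fractional revival, but its proof through Theorem~\ref{6t1} preserves the coefficients $\alpha,\beta$, so perfect state transfer ($\alpha=0$) transfers as well.
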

The constructions in Example \ref{6ex1} and Corollary \ref{6cor3} provide infinitely many unweighted graphs with perfect $(m,L)$-state transfer.
\begin{cor}
    For each $k\geq 3,$ there are infinitely many connected unweighted graphs (resp., trees, unicyclic graphs) with maximum degree $k$ admitting perfect $(m,L)$-state transfer with respect to the adjacency matrix where $m=3$ and $L=\ob{\frac{1}{\sqrt{3}},-\frac{1}{\sqrt{3}},\frac{1}{\sqrt{3}}}.$
\end{cor}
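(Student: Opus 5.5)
The plan is to build, for each $k\geq 3$, infinite families of graphs that contain the $P_{11}$ "core" of Example \ref{6ex1} and satisfy the hypothesis of Corollary \ref{6c2}. The core is the path $1,2,\ldots,11$, and we fix the state vectors $\u_1=\frac{1}{\sqrt{3}}(\e_1-\e_7+\e_9)$, $\u_2=\frac{1}{\sqrt{3}}(\e_2-\e_6+\e_{10})$ and $\u_3=\frac{1}{\sqrt{3}}(\e_3-\e_5+\e_{11})$. These vanish at vertices $4$ and $8$, so $S\supseteq\{4,8\}$. By Example \ref{6ex1}, $H$ is the adjacency matrix of $P_3$, which has PST between its end vertices at time $\pi/\sqrt{2}$. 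Hence, by Corollary \ref{6c2}, any graph $X$ that contains this $P_{11}$ as an induced subgraph, and in which only vertices $4$ and $8$ have neighbours outside the path, has PST between $\u_1$ and $\u_3$. These are $(3,L)$-states with $L=\left(\frac{1}{\sqrt3},-\frac{1}{\sqrt3},\frac{1}{\sqrt3}\right)$.

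\emph{Trees.} Attach to vertex $4$ exactly $k-2$ pendant paths, of arbitrary lengths (the copies of $P_3$ in Figure \ref{6fig1} are one choice), and do the same at vertex $8$. This forest of additions keeps the graph a tree. Vertices $4$ and $8$ then have degree $2+(k-2)=k$, and every other vertex has degree at most $2$. The maximum degree is therefore exactly $k$, because $k\geq 3$. Letting the lengths of the pendant paths grow gives infinitely many non-isomorphic trees.

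\emph{Unicyclic graphs.} Do the same as for trees, but attach only $k-3$ pendant paths at each of $4$ and $8$. Then add one extra path of arbitrary length $\ell\geq 2$ joining $4$ and $8$, as in Figure \ref{6fig1}; this creates exactly one cycle. For $k=3$ no pendant paths are needed, and this case is the cycle in Corollary \ref{6cor3} together with its pendant $P_3$'s. We can then take either the pendant paths or $\ell$ to be unbounded.

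The only care needed is the induced-subgraph condition, and I expect this to be the single point to check. Every added vertex is new, and the only edges incident to the path are at $4$ and $8$. With $\ell\geq 2$ the joining path does not create the chord $4\sim 8$. Even such a chord would be harmless: Example \ref{6ex1} notes that adding the edge $4\sim8$ leaves $H$ unchanged, though Corollary \ref{6c2} as stated needs $G$ to be induced, so I avoid the chord. So the hypotheses of Corollary \ref{6c2} hold. General connected graphs are covered by the trees, or can be obtained by attaching arbitrary connected graphs of maximum degree at most $k$ at $4$ and $8$ with suitable degree control. This completes each of the three families.
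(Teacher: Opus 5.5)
Your proposal is correct and is essentially the argument the paper has in mind. You apply Corollary~\ref{6c2} to the $P_{11}$ core of Example~\ref{6ex1}, whose states vanish exactly on $S=\{4,8\}$, and you hang all additional structure on the vertices $4$ and $8$: pendant pieces for trees, and an extra $4$--$8$ path for unicyclic graphs, as in Figure~\ref{6fig1} and Corollary~\ref{6cor3}. Your explicit degree count and your use of pendant paths of unbounded length (rather than a bounded number of copies of $P_3$) spell out what the paper leaves implicit, in particular how to get infinitely many trees for a fixed maximum degree $k$.
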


\section{Fractional revival in complementary prisms}\label{6s4}
Throughout this section, we assume that the graph $G$ is simple and connected.
\begin{thm}\label{6t5}
Let $M\in\{A,L,Q\}$, and
$\u$ be a fixed state in $G$ relative to $M$ satisfying
$\mathbf{1}^T\u=0$. Then, for every $t\in\mathbb{R}$,
\[
U_{M(G\overline{G})}(t)
[\u,
\mathbf{0}]^T
=
\alpha(t)[\u,
\mathbf{0}]^T+
\beta(t)
[\mathbf{0},\u]^T,
\]
for some complex-valued functions $\alpha(t)$ and $\beta(t)$.

\end{thm}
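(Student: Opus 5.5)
The plan is to apply Lemma~\ref{6l1} with $k=2$ to the block form \eqref{6eq3} of $M(G\overline{G})$, taking $\w_1=\w_2=\u$, and then read off the conclusion from the intertwining identity used in the proof of Theorem~\ref{6t1}. A fixed state here means an eigenvector, so there is a $\lambda$ with $M(G)\u=\lambda\u$, and we assume $\u$ is a unit vector.

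The only point needing care is that $\u$ is also an eigenvector of the second diagonal block. For each $M\in\{A,L,Q\}$ I will write $M(\overline{G})$ in terms of $M(G)$, $J=\mathbf{1}\mathbf{1}^T$ and $I$:
\begin{itemize}
\item $A(\overline{G})=J-I-A(G)$;
\item $L(\overline{G})=nI-J-L(G)$;
\item $Q(\overline{G})=(n-2)I+J-Q(G)$, using $D(\overline{G})=(n-1)I-D(G)$.
\end{itemize}
Since $J\u=\mathbf{1}(\mathbf{1}^T\u)=\mathbf{0}$, in each case $M(\overline{G})\u=\mu\u$, where $\mu$ is $-1-\lambda$, $n-\lambda$, or $n-2-\lambda$ respectively. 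Notably this works for $L$ and $Q$ without regularity of $G$: the degree terms of $G$ cancel inside $M(\overline{G})$ up to constants, and $J$ is killed by $\u$.

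With $M_1=M(G)+\delta I$, $M_2=M(\overline{G})+\delta I$ and $C_{12}=\zeta I$, the hypotheses of Lemma~\ref{6l1} hold:
\begin{itemize}
\item $M_1\u=(\lambda+\delta)\u$ and $M_2\u=(\mu+\delta)\u$;
\item $C_{12}\u=\zeta\u$ and $C_{12}^T\u=\zeta\u$.
\end{itemize}
Set $\u_1=[\u,\mathbf{0}]^T$, $\u_2=[\mathbf{0},\u]^T$ and $\mathcal{Q}=[\u_1~\u_2]$. Then
\[
M(G\overline{G})\mathcal{Q}=\mathcal{Q}H,\qquad H=\begin{bmatrix}\lambda+\delta&\zeta\\ \zeta&\mu+\delta\end{bmatrix}.
\]
As in the proof of Theorem~\ref{6t1}, this gives $U_{M(G\overline{G})}(t)\mathcal{Q}=\mathcal{Q}U_H(t)$ for all $t\in\mathbb{R}$. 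Applying both sides to $\e_1$ yields
\[
U_{M(G\overline{G})}(t)\u_1=(U_H(t))_{11}\u_1+(U_H(t))_{21}\u_2 .
\]
This is the claimed identity with $\alpha(t)=(U_H(t))_{11}$ and $\beta(t)=(U_H(t))_{21}$.

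If desired, these can be made explicit by diagonalizing the $2\times2$ matrix $H$. Its off-diagonal entry $\zeta\neq0$ forces two distinct eigenvalues, so $\beta(t)$ is not identically zero. The main step to check carefully is the complement identity for $Q$ and $L$ in the non-regular case, where the diagonal degree terms must be verified to contribute only scalar multiples of $I$.
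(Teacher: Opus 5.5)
Your proposal is correct and takes essentially the same route as the paper: Lemma~\ref{6l1} with $\w_1=\w_2=\u$, the same $2\times 2$ matrix $H$, and the intertwining $U_{M(G\overline{G})}(t)\mathcal{Q}=\mathcal{Q}U_H(t)$ from the proof of Theorem~\ref{6t1}. Your identities $A(\overline{G})=J-I-A(G)$, $L(\overline{G})=nI-J-L(G)$ and $Q(\overline{G})=(n-2)I+J-Q(G)$ justify the step the paper only asserts, namely that $\u$ is also an eigenvector of $M(\overline{G})$ without any regularity assumption; they also agree with the values of $(\lambda,\mu)$ the paper later uses for $K_{m,n}$.
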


\begin{proof}
Since $\u$ is a fixed state in $G$ relative to $M,$ there exists $\lambda\in\mathbb{R}$
such that $M(G)\u=\lambda\u.$
Further, $\mathbf1^T\u=0$ implies $J\u=\mathbf{0}.$  Consequently, $\u$ is also an eigenvector of $M(\overline G)$, say
$M(\overline G)\u=\mu\u.$
Now in $M(G\overline{G}),$ consider
\[
M_1=M(G)+\delta I,\quad
M_2=M(\overline G)+\delta I,\quad C_{12}=\zeta I.\] 
For $\w_1=\w_2=\u,$ all the hypotheses of Lemma \ref{6l1} are satisfied, and the matrix $H$ is given by
\[
H=
\begin{bmatrix}
\lambda+\delta & \zeta\\
\zeta & \mu+\delta
\end{bmatrix}.
\]
Since $U_H(t)\e_1=\alpha(t)\e_1+\beta(t)\e_2$ for some complex-valued functions $\alpha(t)$ and $\beta(t),$ by using Theorem \ref{6t1}, we obtain
\[U_{M(G\overline{G})}(t)[\u,
\mathbf{0}]^T
=
\alpha(t)[\u,
\mathbf{0}]^T+
\beta(t)
[\mathbf{0},\u]^T.
\]
\end{proof}

We next characterize the existence of PST in the complementary prism of a graph.
\begin{thm}\label{6t7} 
Suppose the premise of Theorem \ref{6t5} holds and the eigenvalues corresponding to eigenvector $\u$ of $M(G)$ and $M(\overline{G})$ are $\lambda$ and $\mu,$ respectively.
Then the complementary prism $G\overline{G}$ exhibits perfect state transfer between
$[\u,
\mathbf{0}]^T$ and
$[\mathbf{0},\u]^T$
if and only if $\lambda=\mu$. Moreover, perfect state transfer occurs at $
t=\frac{(2k+1)\pi}{2}$ for some $k\in \mathbb{Z}.$
\end{thm}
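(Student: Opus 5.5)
The plan is to reduce everything to the $2\times 2$ matrix $H$ from the proof of Theorem \ref{6t5}. By Theorem \ref{6t1}, PST between $[\u,\mathbf{0}]^T$ and $[\mathbf{0},\u]^T$ in $G\overline{G}$ at time $\tau$ holds exactly when $U_H(\tau)\e_1=\gamma\e_2$ for some unit complex $\gamma$. Here
\[
H=\begin{bmatrix}\lambda+\delta & \zeta\\ \zeta & \mu+\delta\end{bmatrix},\qquad \zeta=\pm1 .
\]
First I compute $\mu$ in terms of $\lambda$, although this is not needed for the equivalence. Write $H=\frac{\lambda+\mu}{2}I+\delta I+K$ with
\[
K=\begin{bmatrix} d & \zeta\\ \zeta & -d\end{bmatrix},\qquad d=\frac{\lambda-\mu}{2}.
\]
The scalar part contributes only a global phase. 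Since $K^2=(d^2+1)I$, setting $\theta=\sqrt{d^2+1}$ gives
\[
\exp(itK)=\cos(\theta t)I+\frac{i\sin(\theta t)}{\theta}K .
\]
Hence, up to a unit phase,
\[
U_H(t)\e_1=\Big(\cos(\theta t)+\tfrac{i d\sin(\theta t)}{\theta}\Big)\e_1+\tfrac{i\zeta\sin(\theta t)}{\theta}\e_2 .
\]

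For the equivalence, PST requires the $\e_1$-coefficient to vanish. Its real part is $\cos(\theta t)$ and its imaginary part is $\frac{d\sin(\theta t)}{\theta}$. If $\cos(\theta t)=0$ then $\sin(\theta t)=\pm1$, so the imaginary part vanishing forces $d=0$, that is, $\lambda=\mu$. Conversely, if $\lambda=\mu$ then $d=0$ and $\theta=1$. The $\e_1$-coefficient is then $\cos t$, which vanishes at $t=\frac{(2k+1)\pi}{2}$. At those times the $\e_2$-coefficient $i\zeta\sin t$ has modulus $1$, so PST occurs. Moreover these are the only times: for $d=0$, PST means exactly $\cos t=0$. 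This gives both the "iff" and the "moreover" statement about the times.

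The only subtle point is checking that Theorem \ref{6t1} really yields an equivalence for PST and not just for fractional revival. That theorem was stated for fractional revival, so I will re-invoke its proof: we have $U_M(t)[\u,\mathbf{0}]^T=\mathcal{Q}U_H(t)\e_1$, and $\mathcal{Q}$ has orthonormal columns. Matching coefficients therefore shows that $\alpha=0$ in one setting if and only if $\alpha=0$ in the other.

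For the times, I should also state the positive representatives: the first PST time is $\pi/2$, and PST occurs at every odd multiple of it. I do not expect any real obstacle. The main care needed is keeping track of the global phase from $\frac{\lambda+\mu}{2}+\delta$.
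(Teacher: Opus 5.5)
Your proof is correct. The sufficiency direction ($\lambda=\mu$ gives PST exactly at odd multiples of $\pi/2$) matches the paper's explicit $2\times 2$ computation.

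The necessity direction takes a different route. The paper cites the external fact that PST in a weighted graph forces $H$ to be mirror symmetric \cite[Lemma 2]{kay}, so the diagonal entries $\lambda+\delta$ and $\mu+\delta$ must coincide. You instead compute $U_H(t)$ for arbitrary $d=(\lambda-\mu)/2$ via $K^2=(1+d^2)I$. You then note that the $(1,1)$ entry has squared modulus $\cos^2(\theta t)+\frac{d^2}{\theta^2}\sin^2(\theta t)$, which vanishes only when $d=0$.

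Your version is self-contained, and it also gives explicit amplitudes $\alpha(t),\beta(t)$ in Theorem~\ref{6t5} when $\lambda\neq\mu$. It shows that the transfer probability $|\beta(t)|^2=\sin^2(\theta t)/(1+d^2)$ never exceeds $1/(1+d^2)$, which quantifies how far from PST the walk stays. The paper's version is shorter but depends on the cited lemma.

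You also check explicitly that Theorem~\ref{6t1} carries the case $\alpha=0$ across, since the same coefficients appear on both sides because $\mathcal{Q}^T\mathcal{Q}=I$; the paper uses this silently. Your opening remark about computing $\mu$ in terms of $\lambda$ is never carried out, but, as you say, it is not needed.
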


\begin{proof}
    By Theorem \ref{6t1}, the complementary prism $G\overline{G}$ exhibits PST between $[\u,
\mathbf{0}]^T$ and
$[\mathbf{0},\u]^T$ if and only if the graph associated with the matrix $H$ given in the proof of Theorem \ref{6t5} admits PST between its two vertices. If the graph associated with $H$ admits PST between its two vertices, then $H$ must be mirror symmetric \cite[Lemma 2]{kay}, and hence $\lambda=\mu.$ Conversely, if $\lambda=\mu,$ then 
\[
U_H(t)
=
\exp{\ob{it\ob{\lambda+\delta}}}
\begin{bmatrix}
\cos(\zeta t) & i\sin(\zeta t)
\\
i\sin(\zeta t)
& \cos(\zeta t)
\end{bmatrix}.
\]
Since $\zeta=\pm 1,$ the graph associated with $H$ admits PST if and only if $t=\frac{(2k+1)\pi}{2},$ for some $k\in\mathbb{Z}.$
\end{proof}

The following result identifies a pair of states that exhibit Laplacian PST in the complementary prism of every graph.
\begin{thm}\label{5t4}
     Let $G$ be a graph on $n$ vertices. The complementary prism of $G$ exhibits perfect state transfer between $\frac{1}{\sqrt{n}}[\mathbf{1},\mathbf{0}]^T$ and $\frac{1}{\sqrt{n}}[\mathbf{0},\mathbf{1}]^T$ at an odd multiple of $\frac{\pi}{2}$ with respect to the Laplacian matrix.
\end{thm}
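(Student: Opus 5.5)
The plan is to apply Lemma \ref{6l1} with $k=2$ to the Laplacian of the complementary prism, exactly as in the proof of Theorem \ref{6t5}, but with $\w_1=\w_2=\frac{1}{\sqrt{n}}\mathbf{1}$ in place of a vector orthogonal to $\mathbf{1}$. By \eqref{6eq3} with $M=L$, we have
\[L(G\overline{G})=\begin{bmatrix} L(G)+I & -I\\ -I & L(\overline{G})+I\end{bmatrix}.\]
Since every Laplacian annihilates $\mathbf{1}$, we get $M_1\w_1=(L(G)+I)\w_1=\w_1$ and $M_2\w_2=(L(\overline{G})+I)\w_2=\w_2$. Also $C_{12}\w_2=-\w_1$ and $C_{12}^T\w_1=-\w_2$. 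No regularity or connectivity assumption is needed, which explains why the statement holds for every graph $G$. Both $\u_1=\frac{1}{\sqrt{n}}[\mathbf{1},\mathbf{0}]^T$ and $\u_2=\frac{1}{\sqrt{n}}[\mathbf{0},\mathbf{1}]^T$ are unit vectors, so the premise of Theorem \ref{6t1} holds with
\[H=\begin{bmatrix}1 & -1\\ -1 & 1\end{bmatrix}.\]

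Next I would compute $U_H(t)$ directly. Since $H=I-X$ with $X=\begin{bmatrix}0&1\\1&0\end{bmatrix}$ and $X^2=I$, we get
\[U_H(t)=e^{it}\begin{bmatrix}\cos t & -i\sin t\\ -i\sin t & \cos t\end{bmatrix}.\]
This matches the formula in the proof of Theorem \ref{6t7} with $\lambda=\mu=0$, $\delta=1$ and $\zeta=-1$. Hence $U_H(t)\e_1=\gamma\e_2$ with $|\gamma|=1$ exactly when $\cos t=0$, that is, at $t=\frac{(2k+1)\pi}{2}$. In that case $\gamma=-ie^{it}\sin t$, which has unit modulus.

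Finally, I would invoke Theorem \ref{6t1}, via the identity $U_{L(G\overline{G})}(t)\mathcal{Q}=\mathcal{Q}U_H(t)$. This transfers the conclusion: $U_{L(G\overline{G})}(t)\u_1=\gamma\u_2$ at every odd multiple of $\frac{\pi}{2}$, which is the claimed PST. An alternative route is to note that $\u_1=\frac{1}{2}\big(\frac{1}{\sqrt{n}}[\mathbf{1},\mathbf{1}]^T+\frac{1}{\sqrt{n}}[\mathbf{1},-\mathbf{1}]^T\big)$ is a combination of the eigenvectors for the eigenvalues $0$ and $2$ given in Theorem \ref{6th2}, and to read off the phases $1$ and $e^{2it}$ directly.

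There is no real obstacle here. The only point needing care is that Theorem \ref{6t5} assumed $\mathbf{1}^T\u=0$, whereas here $\u$ is parallel to $\mathbf{1}$. That hypothesis was used only to make $\u$ an eigenvector of $M(\overline{G})$. For the Laplacian this holds automatically for $\mathbf{1}$, since $L(G)\mathbf{1}=L(\overline{G})\mathbf{1}=\mathbf{0}$, so $\lambda=\mu=0$ and the mirror-symmetric case of Theorem \ref{6t7} applies.
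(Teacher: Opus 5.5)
Your proposal is correct and follows essentially the same route as the paper: apply Lemma \ref{6l1} with $\w_1=\w_2=\frac{1}{\sqrt{n}}\mathbf{1}$ to get $H=\begin{bmatrix}1&-1\\-1&1\end{bmatrix}=L(P_2)$, then use Theorem \ref{6t1} to transfer the PST of $P_2$ at odd multiples of $\frac{\pi}{2}$. The only difference is that you compute $U_H(t)$ explicitly, whereas the paper simply cites the known Laplacian PST in $P_2$.
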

\begin{proof}
    For the Laplacian matrix of a complementary prism of G, we have $M_1=L(G)+I,$ $M_2=L(\overline{G})+I,$ and $C_{12}=C_{12}^T=-I.$ Since $M_1\ob{\frac{1}{\sqrt{n}}}\mathbf{1}=M_2\ob{\frac{1}{\sqrt{n}}}\mathbf{1}=\ob{\frac{1}{\sqrt{n}}}\mathbf{1},$ using Lemma \ref{6l1}, we obtain \[H=\begin{bmatrix}
    1 & -1\\
    -1 & 1
    \end{bmatrix},
    \] which represents the Laplacian matrix of $P_2.$ Since $P_2$ exhibits PST between its end vertices at an odd multiple of $\frac{\pi}{2},$  Theorem \ref{6t1} implies that the complementary prism also exhibits PST between $\frac{1}{\sqrt{n}}[\mathbf{1},\mathbf{0}]^T$ and $\frac{1}{\sqrt{n}}[\mathbf{0},\mathbf{1}]^T$ at the same time. 
\end{proof}

\begin{exm}
Let $P_n$ be a path on 
$n$ vertices, with vertex set 
$\{1,2,\ldots,n\}$ labeled consecutively along the path. One may observe $P_4$ as a complementary prism of a complete graph on two vertices. Then Theorem \ref{5t4} yields Laplacian PST between $\frac{1}{\sqrt{2}}(\e_1+\e_4)$ and $\frac{1}{\sqrt{2}}(\e_2+\e_3)$ at $\frac{\pi}{2},$ which is consistent with \cite[Remark 7.9(2)]{god7}. 
\end{exm}
 \begin{figure}
\begin{center}
\begin{tikzpicture}[scale=.3,auto=left]
                       \tikzstyle{every node}=[circle, thick, fill=white, scale=0.6]
                       
		        \node[draw] (1) at (-4,4) {$a$};		        
		        \node[draw,minimum size=0.65cm, inner sep=0 pt] (2) at (-4, -4) {};
		        \node[draw,minimum size=0.65cm, inner sep=0 pt] (3) at (4, -4) {$b$};
		        \node[draw,minimum size=0.65cm, inner sep=0 pt] (4) at (4, 4) {};
		        \node[draw,minimum size=0.65cm, inner sep=0 pt] (5) at (-1.5, 1.5) {$a'$};		 \node[draw,minimum size=0.65cm, inner sep=0 pt] (6) at (-1.5, -1.5) {};       
	\node[draw,minimum size=0.65cm, inner sep=0 pt] (7) at (1.5, -1.5) {$b'$};
    \node[draw,minimum size=0.65cm, inner sep=0 pt] (8) at (1.5, 1.5) {};

				\draw [thick, black!70] (1)--(2)--(3)--(4)--(1);
				
                \draw [thick, black!70] (1)--(5)--(7)--(3);
\draw [thick, black!70] (2)--(6)--(8)--(4);
		  
				\end{tikzpicture}
				
\end{center}	
\caption{\label{5fig1} The complementary prism $C_4\overline{C_4}.$}
\end{figure}
\begin{exm}
Let $a$ and $b$ be a pair of antipodal vertices in $C_4,$ a cycle of length four. Then the state $\frac{1}{\sqrt{2}}(\e_a-\e_b)$ becomes a fixed state in $C_4.$ Therefore, by Theorem \ref{6t5} the complementary prism $C_4\overline{C_4}$ exhibits fractional revival from $\u_1=\frac{1}{\sqrt{2}}(\e_a-\e_b)$ to $\u_2=\frac{1}{\sqrt{2}}(\e_{a'}-\e_{b'})$ [see Figure \ref{5fig1}] with respect to the adjacency, Laplacian and signless Laplacian matrices. In particular, at $t=\frac{\pi}{2},$ $C_4\overline{C_4}$ exhibits PST between $\frac{1}{\sqrt{2}}(\e_a-\e_b)$ and $\frac{1}{\sqrt{2}}(\e_{a'}-\e_{b'})$ with respect to the Laplacian matrix. Moreover, let $X$ be the graph obtained from $C_4\overline{C_4}$ by adjoining two vertices $u$ and $v,$ which are adjacent only to all the vertices of the induced subgraphs $C_4$ and $\overline{C_4}$ of $C_4\overline{C_4},$ respectively. Now, the adjacency, Laplacian and signless Laplacian matrices associated with the graph $X$ can be realized as the block matrix $M'$ appearing in Theorem \ref{6t4}. Since $\mathbf{1}^T\u_j=0,$ for $j=1,2,$ it follows that $B^T\u_j=\mathbf{0}.$ Hence, by Theorem \ref{6t4} the graph $X$ exhibits fractional revival from $[\u_1,\mathbf{0}]^T$ to $[\u_2,\mathbf{0}]^T$ with respect to the adjacency, Laplacian, and signless Laplacian matrices. In particular at $t=\frac{\pi}{2},$ the graph $X$ exhibits PST between $[\u_1,\mathbf{0}]^T$ and $[\u_2,\mathbf{0}]^T$ with respect to the Laplacian matrix. 
\end{exm}
Two vertices $a$ and $b$ in $G$ are said to be twins if $N_G(a)\backslash\{b\}=N_G(b)\backslash\{a\},$ where $N_G(a)$ denotes the set of vertices that are adjacent to vertex $a$ in $G.$ If $a$ and $b$ are adjacent, then they are called true twins; otherwise, they are called false twins. In either case, the state $\frac{1}{\sqrt{2}}(\e_a-\e_b)$ is a fixed state in $G$ with respect to the adjacency, Laplacian, and signless Laplacian matrices \cite[Example 2.4(iii)]{god7}. Therefore, applying Theorem \ref{6t5}, we obtain the following result.
\begin{cor}\label{6c3}
  Let $a$ and $b$ be twin vertices in a graph $G.$  Then the complementary prism $G\overline{G}$ exhibits fractional revival from $\frac{1}{\sqrt{2}}(\e_a-\e_b)$ to $\frac{1}{\sqrt{2}}(\e_{a'}-\e_{b'})$ with respect to the adjacency, Laplacian and signless Laplacian matrices.   
\end{cor}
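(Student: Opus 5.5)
Set $\u=\frac{1}{\sqrt{2}}(\e_a-\e_b)$ and apply Theorem~\ref{6t5}. We must check its hypotheses: $\u$ is a fixed state of $G$ for each $M\in\{A,L,Q\}$, and $\mathbf{1}^T\u=0$. Orthogonality is immediate. For the fixed-state property we can cite \cite[Example 2.4(iii)]{god7}, but a direct check is short. Write $\epsilon=1$ if $a\sim b$ and $\epsilon=0$ otherwise. For any vertex $c\notin\{a,b\}$ we have $(A\u)_c=\frac{1}{\sqrt2}(A_{c,a}-A_{c,b})=0$, because twins have the same neighbours outside $\{a,b\}$. Also $(A\u)_a=\frac{1}{\sqrt2}(0-\epsilon)$ and $(A\u)_b=\frac{1}{\sqrt2}(\epsilon-0)$. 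Hence $A\u=-\epsilon\u$. Twins have equal degree $d$, so $D\u=d\u$. It follows that $L\u=(d+\epsilon)\u$ and $Q\u=(d-\epsilon)\u$. Thus $\u$ is an eigenvector, and by the first Proposition of Section~\ref{6s2} it is a fixed state.

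Theorem~\ref{6t5} then gives
\[U_{M(G\overline G)}(t)[\u,\mathbf 0]^T=\alpha(t)[\u,\mathbf 0]^T+\beta(t)[\mathbf 0,\u]^T\]
for all $t$. Here $[\mathbf 0,\u]^T=\frac{1}{\sqrt2}(\e_{a'}-\e_{b'})$, and this vector is linearly independent of $[\u,\mathbf 0]^T$.

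Fractional revival needs $\beta(\tau)\neq0$ at some time $\tau$, and this is the one remaining point. By the proof of Theorem~\ref{6t1}, $\beta(t)=(U_H(t))_{2,1}$, where
\[H=\begin{bmatrix}\lambda+\delta&\zeta\\ \zeta&\mu+\delta\end{bmatrix},\qquad \zeta=\pm1.\]
Suppose $\beta\equiv0$. Differentiating at $t=0$ gives $\beta'(0)=iH_{21}=i\zeta\neq0$, a contradiction. So $\beta(\tau)\neq0$ for every sufficiently small $\tau>0$. In fact $\beta$ is a nontrivial combination of $e^{it\theta_1}$ and $e^{it\theta_2}$, where $\theta_1\ne\theta_2$ are the eigenvalues of $H$; they are distinct because $H$ has nonzero off-diagonal entries. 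Therefore $\beta$ vanishes only on a discrete set.

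This gives fractional revival from $\frac{1}{\sqrt2}(\e_a-\e_b)$ to $\frac{1}{\sqrt2}(\e_{a'}-\e_{b'})$ for all three matrices. The eigenvector verification is routine. The only real obstacle is ruling out $\beta\equiv0$, and the derivative argument settles it.
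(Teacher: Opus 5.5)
Your proof is correct and follows the paper's route: the paper cites \cite[Example 2.4(iii)]{god7} for the fact that $\frac{1}{\sqrt{2}}(\e_a-\e_b)$ is a fixed state, which you also verify directly, and then applies Theorem~\ref{6t5}. Your extra step, showing $\beta(t)=(U_H(t))_{2,1}$ is not identically zero via $\beta'(0)=i\zeta\neq 0$, makes explicit the requirement $\beta\neq 0$ in the definition of fractional revival, which the paper leaves implicit.
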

\begin{exm}\label{5ex3}
    Let $K_n$ be a complete graph on $n$ vertices. The corona product of $K_n$ and $K_1$ is the complementary prism $K_n\overline{K_n}.$ Since any two distinct vertices $a$ and $b$ in $K_n$ are twins, the state $\frac{1}{\sqrt{2}}(\e_a-\e_b)$ is a fixed state in $K_n.$ Therefore, Corollary \ref{6c3} implies that the corona product of $K_n$ and $K_1$ exhibits fractional revival from the state $\frac{1}{\sqrt{2}}(\e_a-\e_b)$ to $\frac{1}{\sqrt{2}}(\e_{a'}-\e_{b'})$ with respect to the adjacency, Laplacian, and signless Laplacian matrices.
\end{exm}
\begin{exm}
  Let $K_{m,n}$ be a complete bipartite graph, and $a$ and $b$ be any two vertices in the same partite set. Then $\frac{1}{\sqrt{2}}(\e_a-\e_b)$ becomes a fixed state in $K_{m,n}.$ Hence, by Corollary \ref{6c3}, the complementary prism $K_{m,n}\overline{K_{m,n}}$  exhibits fractional revival from the state $\frac{1}{\sqrt{2}}(\e_a-\e_b)$ to $\frac{1}{\sqrt{2}}(\e_{a'}-\e_{b'})$ with respect to the adjacency, Laplacian, and signless Laplacian matrices.
\end{exm}

\begin{rem}
   Let $a$ and $b$ be twin vertices in a graph $G.$ Then
$\frac{1}{\sqrt{2}}(\e_a-\e_b)$ is a fixed state in $G$ with respect to $M\in \{A,L,Q\}.$ The existence of PST between $
\frac{1}{\sqrt{2}}(\e_a-\e_b)$
and $\frac{1}{\sqrt{2}}(\e_{a'}-\e_{b'})$ in the complementary prism $G\overline{G}$
can be observed using \cite[Corollary 4]{god8}  and \cite[Corollary 1(1(a))]{ojha1} for $M=A$ and $M\in\{L,Q\},$ respectively, where the subgraphs induced by $\{a,a'\}$ and $\{b,b'\}$ form edge-perturbed twin subgraphs, while the subgraph induced by $\{a,a'\}$ can be regarded as a half graph in $G\overline{G}.$  
 
\end{rem}

Further, we investigate the existence of PST in the complementary prisms of the complete graph and the complete bipartite graph. 
\subsection{Complementary prism $K_n\overline{K_n}$}
 The  eigenvalues of $A(K_n$) are $n-1$ and $-1$ with multiplicities $1$ and $n-1,$ respectively. The corresponding orthonormal eigenvectors are $u_1=\frac{1}{\sqrt{n}}\tb{1,1,\ldots,1}^T,$ for the eigenvalue $n-1,$ and 
 \[
\u_{k+1} = \frac{1}{\sqrt{k(k+1)}}[\underbrace{1,1,\dots,1}_{k-\text{times}},\,-k,\,0,\dots,0]^T, \quad\text{where $k=1,2,\ldots,n-1,$}\]
for the eigenvalue $-1.$
Using Theorem \ref{5cor1}, the eigenvalues and the corresponding orthonormal eigenvectors of $A\ob{K_n\overline{K_n}}$ are as follows.  
\begin{enumerate}[label=(\roman*)]
\item $\alpha_{\pm}(-1)=\dfrac{-1\pm\sqrt{5}}{2},$ each with multiplicity $n-1,$ and the corresponding orthonormal eigenvectors are $\frac{2}{\sqrt{10\pm2\sqrt{5}}}\tb{\mathbf{u}_j,
    \ob{\frac{1\pm\sqrt{5}}{2}}\mathbf{u}_j}^T,$ for $2\leq j\leq n,$ which are orthogonal to the all one vector.
 \item $\beta_{\pm}(n,n-1)=\dfrac{n-1\pm\sqrt{(n-1)^2+4}}{2}$ with eigenvectors $\frac{1}{\sqrt{{n\ob{1+ c_{\pm}^2}}}}[\mathbf{1},c_{\pm}\mathbf{1}]^T,$ where $c_{\pm}=\beta_{\pm}-(n-1).$ 
\end{enumerate}
For $n\geq 2,$ the eigenvalues $\alpha_{\pm}\neq \beta_{\pm}$ and $c_{\pm}\neq 1.$

\begin{thm}\label{6t9}
    There is no perfect pair state transfer in the complementary prism $K_n\overline{K_n}$ with respect to the adjacency matrix for $n\geq 2.$
\end{thm}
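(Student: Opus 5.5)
The plan is to show that no two distinct pair states of $K_n\overline{K_n}$ are strongly cospectral with respect to $A$. Since strong cospectrality is necessary for perfect state transfer \cite[Lemma 5.1]{god7}, this rules out pair PST. I will test strong cospectrality through Proposition \ref{6p2}, using the explicit eigenspaces listed above.

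A pair state $\frac{1}{\sqrt2}(\e_x-\e_y)$ has $x,y\in V(K_n)\cup V(\overline{K_n})$, so I split into three types: (I) $x,y\in V(K_n)$; (II) $x=a',y=b'$ both in $\overline{K_n}$; (III) $x=a$, $y=b'$ with one endpoint on each side, where $a=b$ is allowed. A general eigenvector for $\alpha_\pm=\alpha_\pm(-1)$ has the form $[\u,c\u]^T$, where
\[c=\alpha_\pm+1=\tfrac{1\pm\sqrt5}{2}\]
and $\u\perp\mathbf 1$ is arbitrary. On such an eigenvector the quantity $\v(x)-\v(y)$ takes the following values:
\begin{itemize}
\item type (I): $\u(a)-\u(b)$;
\item type (II): $c(\u(a)-\u(b))$;
\item type (III): $\u(a)-c\,\u(b)$.
\end{itemize}
Each is a linear functional of $\u$ on $\mathbf 1^\perp$. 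Two vectors $f,g\in\mathbb R^n$ define the same functional on $\mathbf 1^\perp$ if and only if $f-g\in\operatorname{span}\{\mathbf 1\}$.

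So, for two pair states $P$ and $R$, Proposition \ref{6p2} applied to $\alpha_+$ and $\alpha_-$ requires $f_P-\delta g_R\in\operatorname{span}\{\mathbf1\}$ for each of the two values of $c$. Here $f_P$ is one of $\e_a-\e_b$, $c(\e_a-\e_b)$ or $\e_a-c\e_b$, and similarly for $g_R$. I will go through the type combinations, assuming first that $n\ge3$.
\begin{itemize}
\item If the two states have different types among (I) and (II), we would need $c=\pm1$, which is impossible.
\item If both are of type (I), or both of type (II), then $\e_a-\e_b\mp(\e_z-\e_w)$ can only be a multiple of $\mathbf 1$ when it is $\mathbf 0$, because it has at most four nonzero entries summing to zero and $n\ge3$. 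This forces the two states to coincide up to sign.
\item If one state is of type (III), the entries of $f_P$ involve the irrational number $c$. Comparing the two conditions, for $c_+$ and for $c_-$, and using that $c_+\neq c_-$, forces the coefficient patterns to agree. Hence $R=\pm P$ again.
\end{itemize}

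I expect the main obstacle to be the degenerate case $n=2$, together with type (III) states that share vertices. When $n=2$, $\mathbf1^\perp$ is one-dimensional, so the functional argument loses information. For $n=2$ I will instead work directly with $P_4=K_2\overline{K_2}$ and its four explicit eigenvectors. There I will check Proposition \ref{6p2} for each of the $\binom{6}{2}$ pairs of pair states. I will also use the $\beta_\pm$ eigenvectors $[\mathbf1,c_\pm\mathbf1]^T$, on which a type (III) state takes the value $1-c_\pm\neq0$ while types (I) and (II) vanish. This separates type (III) from the other types via the eigenvalue supports.

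If some pair nevertheless survives strong cospectrality for $n=2$, I will fall back on Theorem \ref{6t3}. Its eigenvalue support contains elements such as $\frac{-1\pm\sqrt5}{2}$ and $\frac{1\pm\sqrt5}{2}$. I will verify that the ratio condition fails for this support, so the state is not periodic and therefore admits no PST.
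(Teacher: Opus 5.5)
Your proposal is correct, but it takes a different route from the paper.

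\textbf{How the two routes differ.} The paper splits the pair states by periodicity. A state $\frac{1}{\sqrt2}(\e_a-\e_{b'})$, with one endpoint in each copy, has all four eigenvalues $\alpha_\pm(-1)$ and $\beta_\pm(n,n-1)$ in its support. By Theorem~\ref{6t3} it is therefore not periodic, so it cannot take part in PST. Proposition~\ref{6p2} is then needed only for the periodic states $\frac{1}{\sqrt2}(\e_a-\e_b)$ and $\frac{1}{\sqrt2}(\e_{a'}-\e_{b'})$, whose support is $\{\alpha_\pm(-1)\}$.

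You instead use strong cospectrality for all three types when $n\ge 3$, working only with the $\alpha_\pm$ eigenspaces, and you need non-periodicity only at $n=2$.

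\textbf{What each route buys.} Your route proves more for $n\ge 3$: no two distinct pair states are even strongly cospectral. This would also exclude, for instance, pretty good pair state transfer. Your criterion $f_P-\delta g_R\in\operatorname{span}\{\mathbf 1\}$ also turns the appeal to Proposition~\ref{6p2}, which the paper states without computation, into a checkable calculation. The paper's route is uniform in $n$ and never has to compare two mixed states with each other. That comparison is exactly where your argument stops working at $n=2$.

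\textbf{Execution point 1: the case $n=2$.} The fallback is genuinely needed here, and your diagnosis is slightly off. The condition $f_P-\delta g_R\in\operatorname{span}\{\mathbf1\}$ is an exact criterion for every $n$. What changes at $n=2$ is that strongly cospectral pairs really exist.
\begin{itemize}
\item In $P_4$, the states $\frac{1}{\sqrt2}(\e_a-\e_{a'})$ and $\frac{1}{\sqrt2}(\e_b-\e_{b'})$ are strongly cospectral, with $\delta=-1$ on $\alpha_\pm$ and $\delta=1$ on $\beta_\pm$.
\item So are $\frac{1}{\sqrt2}(\e_a-\e_{b'})$ and $\frac{1}{\sqrt2}(\e_b-\e_{a'})$.
\end{itemize}
Both pairs are of type (III). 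Their support is all of $\{\frac{\pm1\pm\sqrt5}{2}\}$, and the ratio condition fails because the differences $\sqrt5$ and $1$ both occur. So your fallback does close the case.

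\textbf{Execution point 2: the type (III) comparisons.} Arguing by ``comparing the conditions for $c_+$ and $c_-$'' is awkward, because $\delta$ may differ between $\alpha_+$ and $\alpha_-$. It is cleaner to fix a single $c$. The multiple of $\mathbf1$ is then determined by the entry sum. Since $1$ and $c$ are linearly independent over $\mathbb Q$, every entry splits into a rational part and a $c$-part. This gives the following.
\begin{itemize}
\item (III) against (I) or (II) is impossible for every $n\ge 2$. For example, against (I) the $c$-part reads $\e_b=\frac1n\mathbf1$.
\item (III) against (III) with $\delta=1$ forces $R=P$.
\item (III) against (III) with $\delta=-1$ forces $\e_a+\e_z=\frac2n\mathbf1$. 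This is impossible for $n\ge3$, and at $n=2$ it yields exactly the two pairs listed above.
\end{itemize}
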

\begin{proof}
    Let $\frac{1}{\sqrt{2}}\ob{\e_a-\e_{b'}}$ be a pair state in $K_n\overline{K_n},$ where $a\in V(K_n)$ and $b'\in V(\overline{K_n}).$ The eigenvalue support of $\frac{1}{\sqrt{2}}\ob{\e_a-\e_{b'}}$ contains $\alpha_{\pm}(-1)$ and $\beta_{\pm}(n,n-1).$ Hence by Theorem \ref{6t3}, the state is not periodic, resulting in no pair PST from $\frac{1}{\sqrt{2}}\ob{\e_a-\e_{b'}}.$ 
The eigenvalue supports of $\frac{1}{\sqrt{2}}\ob{\e_a-\e_b}$ and $\frac{1}{\sqrt{2}}\ob{\e_{a'}-\e_{b'}}$ contain $\alpha_{\pm}(-1).$ Using Proposition \ref{6p2}, no pair state of the form $\frac{1}{\sqrt{2}}\ob{\e_c-\e_d}$ and $\frac{1}{\sqrt{2}}\ob{\e_{c'}-\e_{d'}}$ are strongly cospectral with $\frac{1}{\sqrt{2}}\ob{\e_a-\e_b}$ or $\frac{1}{\sqrt{2}}\ob{\e_{a'}-\e_{b'}}.$ Therefore, using \cite[Lemma 5.1]{god7}, there is no pair PST in the complementary prism of a complete graph. 
\end{proof}
Recall that the vertex corona $K_n\circ K_1$ can be realized as the complementary prism $K_n\overline{K_n}.$ Wang et al. \cite{wang} provided sufficient conditions for the existence and non-existence of Laplacian pair PST in vertex coronas. In particular, \cite[Example 3.2]{wang} shows the non-existence of Laplacian pair PST between certain types of pair states in $K_n\circ H$, where $n\geq 3$ and $H$ is a connected graph on $m$ vertices. The following result provides a complete characterization of Laplacian pair PST in $K_n\overline{K_n}$.

\begin{thm}\label{6th10}
   The complementary prism  $K_n\overline{K_n}$ admits perfect pair state transfer with respect to the Laplacian matrix if and only if $n=2.$ 
\end{thm}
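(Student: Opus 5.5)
The plan is to use the Laplacian spectrum of $K_n\overline{K_n}$ from Theorem \ref{6th2}, treat $n=2$ directly, and rule out $n\geq 3$ by combining the periodicity criterion (Theorem \ref{6t3}) with strong cospectrality (Proposition \ref{6p2}). Since $L(K_n)$ has eigenvalue $n$ with multiplicity $n-1$ and eigenvalue $0$ on $\mathbf{1}$, Theorem \ref{6th2} gives the following spectrum of $L(K_n\overline{K_n})$:
\begin{itemize}
\item $0$, with eigenvector $[\mathbf{1},\mathbf{1}]^T$;
\item $2$, with eigenvector $[\mathbf{1},-\mathbf{1}]^T$;
\item $\alpha_\pm=\frac{n+2\pm\sqrt{n^2+4}}{2}$, each of multiplicity $n-1$, with eigenvectors $[\u,c_\pm\u]^T$, where $\u\perp\mathbf{1}$ is arbitrary and $c_\pm=n+1-\alpha_\pm$.
\end{itemize}
For $n\geq 1$, $n^2+4$ is not a perfect square, so $\alpha_\pm$ are irrational algebraic conjugates. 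Also $c_\pm\neq\pm1$, and hence $c_\pm\neq 0$.

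\emph{Sufficiency ($n=2$).} Here $K_2\overline{K_2}$ is the path $P_4$ with vertex order $a',a,b,b'$, and its Laplacian eigenvalues are $0,2,2\pm\sqrt2$. I would consider the pair states $\frac{1}{\sqrt2}(\e_{a'}-\e_a)$ and $\frac{1}{\sqrt2}(\e_b-\e_{b'})$.
\begin{itemize}
\item Using the eigenvectors $(1,-1,-1,1)$ for $2$ and $(1,\sqrt2-1,1-\sqrt2,-1)$, $(1,-1-\sqrt2,1+\sqrt2,-1)$ for $2\mp\sqrt2$, the two states are strongly cospectral.
\item The eigenvalue support is $\{2,2\pm\sqrt2\}$. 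The sign is $-$ on $2$ and $+$ on $2\pm\sqrt2$; eigenvalue $0$ is not in the support.
\item PST at time $t$ therefore requires $e^{2it}=-e^{i(2\pm\sqrt2)t}$, which holds at $t=\pi/\sqrt2$.
\end{itemize}
So pair PST occurs when $n=2$.

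\emph{Necessity ($n\geq3$).} I would split all pair states $\frac{1}{\sqrt2}(\e_x-\e_y)$ into two types.

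\emph{Type 1: $x\in V(K_n)$ and $y\in V(\overline{K_n})$,} that is, $y=a'$ or $y=b'$ with $b\neq a$. Projecting onto $[\mathbf{1},-\mathbf{1}]^T$ gives $1-(-1)=2\neq0$, so $2$ lies in the support. Choosing $\u\perp\mathbf{1}$ with $\u(x)\neq c_\pm\u(y')$, where $y'$ is the vertex of $K_n$ whose copy is $y$, shows that $\alpha_+$ and $\alpha_-$ also lie in the support. The support then has at least three elements and is closed under conjugates. By Theorem \ref{6t3}, periodicity would force $2=\frac{4+0\cdot\sqrt\Delta}{2}$ and $\alpha_\pm=\frac{(n+2)\pm\sqrt{n^2+4}}{2}$ to share the same integer $a$. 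That gives $n+2=4$, which contradicts $n\geq3$. Hence such states are not periodic and cannot take part in PST.

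\emph{Type 2: both endpoints in the same block,} that is, $\frac{1}{\sqrt2}(\e_a-\e_b)$ or $\frac{1}{\sqrt2}(\e_{a'}-\e_{b'})$. These are orthogonal to $[\mathbf{1},\pm\mathbf{1}]^T$, so their support is exactly $\{\alpha_\pm\}$. Any strongly cospectral partner must have the same support, so it must also be of Type 2, since Type 1 states contain $2$ in their support. Now apply Proposition \ref{6p2} with eigenvectors $[\u,c_\pm\u]^T$ for arbitrary $\u\perp\mathbf{1}$.
\begin{itemize}
\item If the partner lies in the other block, the condition reads $\u(a)-\u(b)=\delta\, c_\pm(\u(c)-\u(d))$, and it must hold for all such $\u$. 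Taking $\u=\e_a-\e_b$ shows $\{c,d\}=\{a,b\}$, and then $|c_\pm|=1$, a contradiction.
\item If the partner lies in the same block with $\{c,d\}\neq\{a,b\}$, choose $\u\perp\mathbf{1}$ supported on $\{a,b\}$ together with a vertex outside $\{c,d\}$ when needed; this violates the condition.
\end{itemize}
So no two distinct pair states are strongly cospectral, and by \cite[Lemma 5.1]{god7} there is no PST.

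\emph{Main obstacle.} I expect the hardest step to be the careful case analysis in the strong-cospectrality argument, especially making sure every Type 1 state really contains both $\alpha_+$ and $\alpha_-$ in its support so that Theorem \ref{6t3} applies.
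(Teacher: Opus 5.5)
Your proposal is correct and follows essentially the same route as the paper. For $n=2$ you verify strong cospectrality and PST at $\pi/\sqrt2$ in $P_4$ directly, where the paper simply cites Chen--Godsil. For $n\neq 2$ you show, as the paper does, that mixed pair states $\frac{1}{\sqrt2}(\e_a-\e_{b'})$ have support $\{2,\alpha_\pm\}$ and fail Theorem~\ref{6t3}, while same-block pair states have support $\{\alpha_\pm\}$ and have no strongly cospectral partner by Proposition~\ref{6p2}. The only thing to add is the trivial case $n=1$, which your ``$n\geq 3$'' argument skips: $K_1\overline{K_1}\cong K_2$ has no $\alpha_\pm$ eigenvalues and only one pair state up to sign, so there is no PST between linearly independent pair states.
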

\begin{proof}
If $n=2,$ the  complementary prism $K_2\overline{K_2}$ represents the path $P_4,$ which admits Laplacian pair PST at $\frac{\pi}{\sqrt{2}}$ \cite[Theorem 7.7]{che1}. Now suppose that $n\neq 2.$ 
    Since $K_n$ is regular, the vectors $\u_1$ and $\u_j$ for $2\leq j\leq n,$ defined as eigenvectors of $A(K_n),$ are also eigenvectors of $L(K_n)$ corresponding to the eigenvalues $0$ and $n,$ respectively.
    The eigenvalues of $L\ob{K_{n}\overline{K_{n}}}$ are $2, 0$ and $\alpha_{\pm}(n)=\frac{n+2\pm\sqrt{n^2+4}}{2},$ with eigenvectors as given in Theorem \ref{6th2}. Let $a,b$ and $a',b'$ be vertices in $K_{n}\overline{K_{n}},$ where $a,b\in V(K_{n})$ and $a',b'\in V(\overline{K_{n}}).$ The eigenvalue support of $\frac{1}{\sqrt{2}}\ob{\e_a-\e_{b'}}$ contains $2$ and $\alpha_{\pm}(n).$  Hence Theorem \ref{6t3} implies that the state is not periodic, resulting in no pair PST from $\frac{1}{\sqrt{2}}\ob{\e_a-\e_{b'}}.$ 
The eigenvalue supports of $\frac{1}{\sqrt{2}}\ob{\e_a-\e_b}$ and $\frac{1}{\sqrt{2}}\ob{\e_{a'}-\e_{b'}}$ contain $\alpha_{\pm}(n).$ Using Proposition \ref{6p2}, no pair state of the form $\frac{1}{\sqrt{2}}\ob{\e_c-\e_d}$ and $\frac{1}{\sqrt{2}}\ob{\e_{c'}-\e_{d'}}$ are strongly cospectral with $\frac{1}{\sqrt{2}}\ob{\e_a-\e_b}$ or $\frac{1}{\sqrt{2}}\ob{\e_{a'}-\e_{b'}}.$ Therefore, by \cite[Lemma 5.1]{god7}, there is no pair PST in $K_n\overline{K_n}$ for $n\neq 2.$ 
\end{proof}
Since the complete graph $K_n$ is regular, the vectors $\u_1$ and $\u_j$ for $2\leq j\leq n,$ defined as eigenvectors of $A(K_n),$ are also eigenvectors of $Q(K_n)$ corresponding to the eigenvalues $2n-2$ and $n-2,$ respectively. The eigenvalues of $Q\ob{K_{n}\overline{K_{n}}}$ are $\alpha_{\pm}(n-2)=\frac{n\pm\sqrt{(n-2)^2+4}}{2}$ and $\beta_{\pm}(n,n-1)=n\pm\sqrt{(n-1)^2+1},$ with the corresponding eigenvectors as given in Theorem \ref{5t5}. 
For $n=2,$ let $\u=\frac{1}{\sqrt{2}}[1,-1]^T.$ Since $Q(K_2)\u=Q(\overline{K_2})\u=\mathbf{0},$ Theorem \ref{6t7} implies that $K_2\overline{K_2}$ exhibits PST between $[\u,\mathbf{0}]^T$ and $[\mathbf{0},\u]^T$ at $\frac{\pi}{2}.$ Since $K_2\overline{K_2}$ is isomorphic to $P_4,$ the path $P_4$ admits PST between  $\frac{1}{\sqrt{2}}\ob{\e_1-\e_4}$ and $\frac{1}{\sqrt{2}}\ob{\e_2-\e_3}$ at $\frac{\pi}{2}$ with respect to the signless Laplacian matrix. 
For $n>2,$ using an argument analogous to that in the proof of Theorem \ref{6t9}, we obtain the following result.

\begin{thm}
    The complementary prism  $K_n\overline{K_n}$ admits perfect pair state transfer with respect to the signless Laplacian matrix if and only if $n=2.$
 \end{thm}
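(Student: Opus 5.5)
For $n=2$ the sufficiency is already in hand from the remark just before the statement. There $\u=\frac{1}{\sqrt{2}}[1,-1]^T$ satisfies $Q(K_2)\u=Q(\overline{K_2})\u=\mathbf{0}$, so Theorem \ref{6t7} with $\lambda=\mu=0$ gives signless Laplacian PST in $K_2\overline{K_2}\cong P_4$ between $[\u,\mathbf{0}]^T$ and $[\mathbf{0},\u]^T$ at $\frac{\pi}{2}$. This is a pair state transfer. Everything else goes into showing that no pair PST exists for $n\ge 3$. As in Theorems \ref{6t9} and \ref{6th10}, I will split the possible pair states into three types: $\frac{1}{\sqrt{2}}(\e_a-\e_{b'})$ (mixed), $\frac{1}{\sqrt{2}}(\e_a-\e_b)$ (inside $K_n$), and $\frac{1}{\sqrt{2}}(\e_{a'}-\e_{b'})$ (inside $\overline{K_n}$). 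Here $a,b\in V(K_n)$, $a',b'\in V(\overline{K_n})$, and $b'$ may equal $a'$.

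\emph{Mixed pair states.} Theorem \ref{5t5} gives orthonormal eigenvectors $\frac{1}{\sqrt{n(1+c_\pm^2)}}[\mathbf{1},c_\pm\mathbf{1}]^T$ with $c_\pm=\beta_\pm-(2n-1)$. I pair one of these with $\e_a-\e_{b'}$ and get $\propto 1-c_\pm$. This is nonzero, since $c_\pm=1$ would force $\beta_\pm=2n$, which contradicts $\beta_\pm=n\pm\sqrt{(n-1)^2+1}$ for $n\ge 2$. So both $\beta_\pm(n,n-1)$ lie in the support. For the $\alpha_\pm(n-2)$ eigenspaces, I choose $\u_j$ with $\u_j(a)\neq 0$ and $\u_j(b)=0$ when $a\neq b$; when $a=b$, I use that $1-(\alpha_\pm-(n-1))\neq 0$. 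The inner product is then nonzero for at least one sign, and I expect both $\alpha_+$ and $\alpha_-$ to appear generically. Next I apply Theorem \ref{6t3}. The support is closed under conjugates and has at least three elements. The radicands are $\Delta_1=(n-2)^2+4$ and $4\big((n-1)^2+1\big)$, so the eigenvalues are written with two square roots $\sqrt{(n-2)^2+4}$ and $\sqrt{(n-1)^2+1}$. These are never simultaneously integers (for $n\ge 3$, $(n-1)^2+1$ is not a square). If they are not rational multiples of a common $\sqrt{\Delta}$, condition (ii) fails. If the support contains all four eigenvalues, the common term $a$ must agree, $n/2$ versus $n$, which contradicts the form $\frac{1}{2}(a+b_j\sqrt{\Delta})$ with a single $a$. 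So the state is not periodic and admits no PST.

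\emph{Pair states inside one side.} The states $\frac{1}{\sqrt{2}}(\e_a-\e_b)$ and $\frac{1}{\sqrt{2}}(\e_{a'}-\e_{b'})$ are orthogonal to $[\mathbf{1},c\mathbf{1}]^T$. Their supports are therefore exactly $\{\alpha_\pm(n-2)\}$, and the projections are $E_{\alpha_\pm}$ applied to $[\u,\mathbf{0}]^T$ or $[\mathbf{0},\u]^T$ with $\u=\frac{1}{\sqrt{2}}(\e_a-\e_b)$. A PST partner must be strongly cospectral, and by Proposition \ref{6p2} it must satisfy $\v(x)-\v(y)=\pm(\v(c)-\v(d))$ for every eigenvector. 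For $n\ge 3$, the $K_n$ eigenvectors $\u_j$ span all vectors orthogonal to $\mathbf{1}$. I can therefore choose one with $\u_j(a)-\u_j(b)=1$ and $\u_j(c)-\u_j(d)$ anything else unless $\{c,d\}=\{a,b\}$. This rules out partners on the same side, except the state itself or its negative, which is excluded. For a partner on the opposite side, $\frac{1}{\sqrt{2}}(\e_{c'}-\e_{d'})$, the eigenvector $[\u_j,(\alpha_\pm-(n-1))\u_j]^T$ would need $1=\delta_{\alpha_\pm}(\alpha_\pm-(n-1))(\u_j(c)-\u_j(d))$ for all $j$. Again the freedom in $\u_j$ forces $\{c,d\}=\{a,b\}$ and then $|\alpha_\pm-(n-1)|=1$ for both signs. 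This would require $\alpha_\pm\in\{n,n-2\}$, which holds only when $(n-2)^2+4=(n-4)^2$ or $(n-2)^2+4=n^2$. These fail for $n\ge 3$. Mixed partners are excluded by the first case, since PST would make them periodic. Hence \cite[Lemma 5.1]{god7} gives no PST.

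The main obstacle is the periodicity argument for mixed pair states. It needs care to confirm which of $\alpha_\pm,\beta_\pm$ actually lie in the support, including the degenerate case $b'=a'$, and to apply the form condition of Theorem \ref{6t3} cleanly with two different radicals and different rational parts.
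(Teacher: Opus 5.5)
Your proposal is correct and follows essentially the paper's route, which is the argument of Theorem~\ref{6t9} carried over to $Q$. The mixed states $\frac{1}{\sqrt{2}}(\e_a-\e_{b'})$ have $\alpha_\pm(n-2)$ and $\beta_\pm(n,n-1)$ in their support and so fail Theorem~\ref{6t3}; your mismatch of rational parts, $n/2$ versus $n$, is exactly the right obstruction. The same-side pair states have support $\{\alpha_\pm(n-2)\}$, and Proposition~\ref{6p2} leaves only the candidate partner $\frac{1}{\sqrt{2}}(\e_{a'}-\e_{b'})$, which is ruled out by $|\alpha_\pm-(n-1)|\neq 1$ or, equivalently, by Theorem~\ref{6t7}, since $\lambda=n-2\neq 0=\mu$. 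Your hedge that the inner product is nonzero ``for at least one sign'' is unnecessary: choosing $\u(b)=0$ when $a\neq b$, or using $\alpha_\pm\neq n$ when $a=b$, already puts both $\alpha_+$ and $\alpha_-$ in the support.
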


\subsection{Complementary prism $K_{m,n}\overline{K_{m,n}}$}
 We investigate the existence of pair PST and plus PST in the complementary prism of a complete bipartite graph.
\begin{thm}\label{6t10}
Let $K_{m,n}$ be a complete bipartite graph. If $a$ and $b$ are two vertices in a partite set having $m$ vertices, then the complementary prism $K_{m,n}\overline{K_{m,n}}$ exhibits perfect state transfer between $\frac{1}{\sqrt{2}}(\e_a-\e_b)$ and $\frac{1}{\sqrt{2}}(\e_{a'}-\e_{b'})$ with respect to the Laplacian and signless Laplacian matrices if and only if $m=n$ and $m-n=2,$ respectively. In the case of the adjacency matrix, there is no perfect pair state transfer between $\frac{1}{\sqrt{2}}(\e_a-\e_b)$ and $\frac{1}{\sqrt{2}}(\e_{a'}-\e_{b'})$.
\end{thm}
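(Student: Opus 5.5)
The plan is to reduce the statement to Theorem~\ref{6t7}, so that everything comes down to computing two eigenvalues. Let $\u=\frac{1}{\sqrt{2}}(\e_a-\e_b)$, where $a,b$ lie in the partite set of size $m$ of $K_{m,n}$.

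First I check that $\u$ satisfies the premise of Theorem~\ref{6t5}.
\begin{itemize}
\item The vertices $a$ and $b$ are false twins in $K_{m,n}$, since they are non-adjacent and have the same neighbourhood, namely the other partite set.
\item By the remark preceding Corollary~\ref{6c3}, $\u$ is therefore a fixed state in $K_{m,n}$ relative to each $M\in\{A,L,Q\}$.
\item Clearly $\mathbf{1}^T\u=0$.
\item $K_{m,n}$ is connected, as the standing assumption of this section requires.
\end{itemize}
Under the vertex labelling of the complementary prism, $[\u,\mathbf{0}]^T=\frac{1}{\sqrt2}(\e_a-\e_b)$ and $[\mathbf{0},\u]^T=\frac{1}{\sqrt2}(\e_{a'}-\e_{b'})$. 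Theorem~\ref{6t7} then says that PST between these two states occurs if and only if $\lambda=\mu$, where $M(K_{m,n})\u=\lambda\u$ and $M(\overline{K_{m,n}})\u=\mu\u$. When it occurs, the time is an odd multiple of $\frac{\pi}{2}$.

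Next I compute $\lambda$ and $\mu$ for each matrix, using $\overline{K_{m,n}}=K_m\cup K_n$. In the complement, $a$ and $b$ are adjacent inside the $K_m$ component and have degree $m-1$. In $K_{m,n}$ they have degree $n$.
\begin{itemize}
\item Adjacency: $A(K_{m,n})\u=\mathbf{0}$, because the rows of $a$ and $b$ coincide. In the complement, $A(\overline{K_{m,n}})\u=-\u$, because $(\e_a-\e_b)$ is a $-1$ eigenvector of $A(K_m)$. So $\lambda=0\neq-1=\mu$, and Theorem~\ref{6t7} rules out PST for every $m,n$.
\item Laplacian: $L(K_{m,n})\u=n\u-A\u=n\u$. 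For the complement, $L(\overline{K_{m,n}})\u=(m-1)\u-(-\u)=m\u$. Hence PST occurs if and only if $m=n$.
\item Signless Laplacian: $Q(K_{m,n})\u=n\u$. For the complement, $Q(\overline{K_{m,n}})\u=(m-1)\u+(-\u)=(m-2)\u$. Hence PST occurs if and only if $n=m-2$, that is, $m-n=2$.
\end{itemize}

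The only real care needed is in the signs for the complement. Adjacency of $a$ and $b$ in $K_m$ contributes $-\u$ to $A\u$. This sign is then subtracted for $L$ and added for $Q$, and that is exactly what produces the asymmetric conditions $m=n$ and $m-n=2$. With these eigenvalues in hand, Theorem~\ref{6t7} yields all three claims of Theorem~\ref{6t10} directly.
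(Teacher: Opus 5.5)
Your proposal is correct and follows the paper's proof. Both verify that $\u=\frac{1}{\sqrt2}(\e_a-\e_b)$ is a fixed state orthogonal to $\mathbf{1}$ and compute $(\lambda,\mu)=(0,-1)$, $(n,m)$, $(n,m-2)$ for $A$, $L$, $Q$, then apply Theorem~\ref{6t7}; your explicit eigenvector check via $\overline{K_{m,n}}=K_m\cup K_n$ supplies the computation the paper only states.
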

\begin{proof}
    Let $\u=\frac{1}{\sqrt{2}}(\e_a-\e_b).$ One may observe that $\u$ is a fixed state in $K_{m,n}$ and $\overline {K_{m,n}}$ with
$\mathbf1^T\mathbf u=0.$ Therefore, for some $\lambda$ and $\mu,$ we have 
$M(K_{m,n})\mathbf u=\lambda\u$, $M(\overline{K_{m,n}})\u=\mu\u,$ where
\[
(\lambda,\mu) =
\begin{cases}
(0,-1), & \text{if}\quad M=A,\\
(n,m), & \text{if}\quad M=L,\\
(n,m-2), & \text{if}\quad M=Q.
\end{cases}
\]
By Theorem \ref{6t7}, PST occurs between $\frac{1}{\sqrt{2}}(\e_a-\e_b)$ and $\frac{1}{\sqrt{2}}(\e_{a'}-\e_{b'})$ if and only if $\lambda=\mu,$ and hence the result follows. 
\end{proof}
Theorem \ref{6t10} provides an infinite family of graphs having PST between pair states corresponding to an edge and a non-edge.
The eigenvalues and corresponding orthonormal eigenvectors of the complete bipartite graph $K_{m,n}$ with respect to the Laplacian matrix are as follows.

\begin{enumerate}[label=(\roman*)]
    \item Eigenvalue $0$ with eigenvector $\frac{1}{\sqrt{m+n}}\mathbf{1}.$
    \item Eigenvalue $m+n$ with eigenvector $\frac{1}{\sqrt{mn(m+n)}}[
            n\mathbf{1}_m,
        -m\mathbf{1}_n
        ]^T.$
        \item Eigenvalue $n$ with algebraic multiplicity $m-1$ and eigenvectors $\u_k=[
           \f_k,
        \mathbf{0}]^T,$ where $\f_k=\frac{1}{\sqrt {k(k+1)}}[\underbrace{1,1,\dots,1}_{k-\text{times}},-k,0,\ldots,0]^T.$
        \item Eigenvalue $m$ with algebraic multiplicity $n-1$ and eigenvectors $\v_k=[
        \mathbf{0},\g_k]^T,$ where $\g_k=\frac{1}{\sqrt {k(k+1)}}[\underbrace{1,1,\dots,1}_{k-\text{times}},-k,0,\ldots,0]^T.$ 
\end{enumerate}

The following result completely characterizes the existence of pair PST in $K_{m,n}\overline{K_{m,n}}$ with respect to the Laplacian matrix. 
\begin{thm}
The complementary prism of a complete bipartite graph $K_{m,n}$ exhibits Laplacian perfect pair state transfer if and only if $m=n.$
\end{thm}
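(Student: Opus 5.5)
The "if" direction will come from Theorem \ref{6t10}. For the converse I will use the explicit spectrum of $L\ob{K_{m,n}\overline{K_{m,n}}}$, obtained by feeding the Laplacian eigenpairs of $K_{m,n}$ listed above into Theorem \ref{6th2}. Then I will rule out every pair state other than those already covered, using either the ratio condition of Theorem \ref{6t3} or strong cospectrality via Proposition \ref{6p2}.

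\emph{Sufficiency.} If $m=n$, take $a,b$ in one partite set. Theorem \ref{6t10} gives Laplacian PST between $\frac{1}{\sqrt2}(\e_a-\e_b)$ and $\frac{1}{\sqrt2}(\e_{a'}-\e_{b'})$ at $t=\pi/2$. This holds because $\lambda=n=m=\mu$, so Theorem \ref{6t7} applies.

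\emph{Necessity: the spectrum.} Assume $m\neq n$. By Theorem \ref{6th2}, the Laplacian eigenvalues of the prism are $0$ and $2$, together with $\alpha_\pm(\lambda)=\frac{N+2\pm\sqrt{(N-2\lambda)^2+4}}{2}$ for $\lambda\in\{m+n,\,n,\,m\}$, where $N=m+n$. The corresponding eigenvectors are $[\x,c_\pm\x]^T$, where $\x$ ranges over the eigenvectors of $K_{m,n}$ listed in (ii)--(iv) and $c_\pm=\lambda+1-\alpha_\pm(\lambda)$. Since $(N-2\lambda)^2+4$ is never a perfect square for integer $N-2\lambda\neq0$, each $\alpha_\pm(\lambda)$ is irrational. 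The radicands are $N^2+4$ for $\lambda=N$, and $(m-n)^2+4$ for both $\lambda=n$ and $\lambda=m$.

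\emph{Necessity: case analysis on pair states.} I will split the possible pair states $\frac{1}{\sqrt2}(\e_x-\e_y)$ by where $x,y$ lie: both in $G$; both in $\overline G$; or one in each. I will also distinguish whether they lie in the same part of the bipartition or in different parts.

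For a mixed pair $x\in V(G)$, $y'\in V(\overline G)$, the eigenvalue $2$ lies in the support, since $[\mathbf1,-\mathbf1]$ gives $1+1\neq0$. For a pair meeting both partite sets, the eigenvector built from (ii) contributes $\alpha_\pm(N)$. In either situation the support will typically contain integers together with $\alpha_\pm(\lambda)$ for two different square-free parts, namely those of $N^2+4$ and $(m-n)^2+4$, or an integer plus an irrational. That violates Theorem \ref{6t3}, so the state is not periodic and admits no PST.

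The remaining candidates are pairs within one partite set, in $G$ or in $\overline G$, and the mixed pairs $\e_a-\e_{b'}$ with $a,b$ in the same part. For these, Proposition \ref{6p2} must be checked against any possible target state. The signs $\delta$ of the $\alpha_+$ and $\alpha_-$ eigenvectors differ through $c_\pm$, and $c_\pm\neq\pm1$ exactly when $\lambda\neq N/2+\ldots$, which comes down to $m\neq n$. So the only strongly cospectral partner of $\frac{1}{\sqrt2}(\e_a-\e_b)$ could be $\frac{1}{\sqrt2}(\e_{a'}-\e_{b'})$, and Theorem \ref{6t10} already excludes that when $m\neq n$. I will then invoke \cite[Lemma 5.1]{god7} to conclude.

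\emph{Main obstacle.} I expect the hardest part to be the bookkeeping in the necessity direction: verifying, for each type of mixed pair state, that the eigenvalue support really mixes incompatible quadratic fields or integers. The delicate sub-case is when $N^2+4$ and $(m-n)^2+4$ might share a square-free part. There I would fall back on strong cospectrality, comparing the eigenvector entries $\x(x)-c\,\x(y)$ across the two eigenvalues $\alpha_\pm$, rather than relying on the ratio condition.
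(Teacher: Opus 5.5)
Your overall route matches the paper's: spectrum from Theorem~\ref{6th2}, non-periodicity via Theorem~\ref{6t3}, Proposition~\ref{6p2} for the remaining states, and Theorem~\ref{6t10} for sufficiency, which is fine. However, the decisive step for pair states inside one partite set does not follow. Write $c_\pm(\lambda)=\lambda+1-\alpha_\pm(\lambda)$. Then $c_+c_-=-1$ and $c_++c_-=2\lambda-(m+n)$, so $c_\pm(n)\neq\pm1$ exactly when $m\neq n$. But this fact \emph{excludes} $\frac{1}{\sqrt2}(\e_{a'}-\e_{b'})$ as a partner of $\frac{1}{\sqrt2}(\e_a-\e_b)$; that is precisely the content of Theorem~\ref{6t10}. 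It says nothing about partners $\frac{1}{\sqrt2}(\e_c-\e_d)$ or $\frac{1}{\sqrt2}(\e_{c'}-\e_{d'})$ with $\{c,d\}\neq\{a,b\}$, so your conclusion that only $\frac{1}{\sqrt2}(\e_{a'}-\e_{b'})$ can be a partner is unsupported.

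The missing ingredient is the shape of the eigenspace. If $a,b$ lie in the $m$-part, the $\alpha_\pm(n)$-eigenspace contains $[\mathbf f,c_\pm(n)\mathbf f]^T$ for \emph{every} zero-sum $\mathbf f$ supported on that part. A partner must have the same support, so it is neither mixed (its support would contain $2$) nor across the parts (it would contain $\alpha_\pm(m+n)$). For $c,d$ in one part, Proposition~\ref{6p2} at $\alpha_+(n)$ says the following. The restriction of $\e_a-\e_b-\delta(\e_c-\e_d)$ to the $m$-part (resp.\ of $\e_a-\e_b-\delta c_+(n)(\e_c-\e_d)$ for a partner in $\overline G$) is a zero-sum vector orthogonal to all zero-sum vectors, hence zero. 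This forces $\{c,d\}=\{a,b\}$, and in the $\overline G$ case also $c_+(n)=\pm1$.

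Separately, the mixed pairs $\frac{1}{\sqrt2}(\e_a-\e_{b'})$ with $a,b$ in the same part, which you list as remaining, are never treated. They need no cospectrality argument. With $\mathbf x=\frac{1}{\sqrt{mn(m+n)}}[n\mathbf 1_m,-m\mathbf 1_n]^T$, one has $\mathbf x(a)-c_\pm(m+n)\mathbf x(b)\neq0$ for \emph{all} $a,b$, since $c_\pm(m+n)$ is irrational. So $2$ and $\alpha_\pm(m+n)$ lie in the support of every mixed pair, and $\{2,\alpha_+(m+n),\alpha_-(m+n)\}$ violates the ratio condition because $m+n\neq2$. This is how the paper disposes of all mixed pairs at once.

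Your worry about a shared square-free part is justified, and the paper's own proof passes over it. For $(m,n)=(6,5)$ one has $(m+n)^2+4=125=5\cdot5^2$ and $(m-n)^2+4=5$. So for $a,b$ in different parts, the support of $\frac{1}{\sqrt2}(\e_a-\e_b)$ is $\{\frac{13\pm5\sqrt5}{2},\frac{13\pm\sqrt5}{2}\}$, with neither $0$ nor $2$ occurring. This state \emph{is} periodic by Theorem~\ref{6t3}, although the paper asserts it is not. So the fallback you only announce is genuinely needed, and it goes through as follows.

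First, $\alpha_+(m+n)$ is simple with eigenvector $[\mathbf x,c_+(m+n)\mathbf x]^T$, and $c_+(m+n)\neq\pm1$ rules out every partner in $\overline G$. Equality of supports rules out mixed and same-part partners. So, up to sign, a partner is $\frac{1}{\sqrt2}(\e_c-\e_d)$ with $c$ in the $m$-part and $d$ in the $n$-part.

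Next, recall $\alpha_\pm(n)=\alpha_\pm(m)$. The $\alpha_+(n)$-eigenspace therefore contains both $[\mathbf f,c_+(n)\mathbf f]^T$ and $[\mathbf g,c_+(m)\mathbf g]^T$ for zero-sum $\mathbf f,\mathbf g$ on the two parts, and it carries a single sign $\delta$. Hence $\mathbf f(a)=\delta\mathbf f(c)$ and $\mathbf g(b)=\delta\mathbf g(d)$ for all such $\mathbf f,\mathbf g$. This forces $(c,d)=(a,b)$ unless $\{m,n\}=\{1,2\}$. In that exceptional case $(m+n)^2+4=13$ and $(m-n)^2+4=5$, so the state is non-periodic anyway.

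With these pieces supplied, your outline becomes a complete proof.
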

\begin{proof}
Suppose $m\neq n.$
    The eigenvalues of $L\ob{K_{m,n}\overline{K_{m,n}}}$ are $2,$ $0,$ $\alpha_{\pm}(m+n),$ $\alpha_{\pm}(n),$ and $\alpha_{\pm}(m),$ where
    % $\alpha_{1,2}(m+n),$ $\alpha_{1,2}(n)$, and $\alpha_{1,2}(m),$ where
    \[\alpha_{\pm}(m+n)=\frac{m+n+2\pm\sqrt{(m+n)^2+4}}{2},\quad\alpha_{\pm}(n)=\alpha_{\pm}(m)=\frac{m+n+2\pm\sqrt{(m-n)^2+4}}{2}\] with eigenvectors as given in Theorem \ref{6th2}. Let $a,b$ and $a',b'$ be vertices in $K_{m,n}\overline{K_{m,n}},$ where $a,b\in V(K_{m,n})$ and $a',b'\in V(\overline{K_{m,n}}).$ For any two vertices $a$ and $b$ in $K_{m,n},$ one may observe that $2,\alpha_{\pm}(m+n)\in\sigma_{\frac{1}{\sqrt{2}}\ob{\e_a-\e_{b'}}},$ while if   $a$ and $b$ are in the different partite set of $K_{m,n},$ then  the eigenvalue support of $\frac{1}{\sqrt{2}}\ob{\e_a-\e_b}$ and $\frac{1}{\sqrt{2}}\ob{\e_{a'}-\e_{b'}}$ contain $\alpha_{\pm}(m+n)$ and $\alpha_{\pm}(n).$ Hence by Theorem \ref{6t3}, none of these states is periodic, resulting in no pair PST from these states. 
    If $a$ and $b$ are in the same partite set of $K_{m,n},$ then the eigenvalue supports of  $\frac{1}{\sqrt{2}}\ob{\e_a-\e_b}$ and $\frac{1}{\sqrt{2}}\ob{\e_{a'}-\e_{b'}}$ contain $\alpha_{\pm}(n).$ Moreover, if vertices $c$ and $d$ are in same partite set of $K_{m,n}$ with $\{a,b\}\neq \{c,d\},$ then Proposition \ref{6p2} shows that neither $\frac{1}{\sqrt{2}}\ob{\e_c-\e_d}$ nor $\frac{1}{\sqrt{2}}\ob{\e_{c'}-\e_{d'}}$ is strongly cospectral with $\frac{1}{\sqrt{2}}\ob{\e_a-\e_b}$ or $\frac{1}{\sqrt{2}}\ob{\e_{a'}-\e_{b'}}.$ Therefore, by \cite[Lemma 5.1]{god7}, there is no pair PST in $K_{m,n}\overline{K_{m,n}}.$  
    The sufficient part follows directly from Theorem \ref{6t10}.
\end{proof}
Next, we investigate the existence of plus PST in the complementary prism $K_{n,n}\overline{K_{n,n}}.$ 
Using Theorem \ref{6th2}, the Laplacian eigenvalues and corresponding orthonormal eigenvectors of the complementary prism $K_{n,n}\overline{K_{n,n}}$ are as follows:
\begin{enumerate}[label=(\roman*)]
    \item Eigenvalue $0$ with eigenvector $\frac{1}{2\sqrt{n}}\mathbf{1}.$
    \item Eigenvalue $2$ with eigenvector $\frac{1}{2\sqrt{n}}[\mathbf{1},
        -\mathbf{1}]^T.$
    \item Eigenvalue $n+1\pm\sqrt{n^2+1}$ with eigenvector $\frac{1}{\sqrt{1+d_{\pm}^2}}[
            \w,
       d_{\pm}\w]^T,$ where $d_{\pm}=n\mp\sqrt{n^2+1}$ and $\w=\frac{1}{\sqrt{2n}}[
            \mathbf{1}_n,-\mathbf{1}_n]^T.$
        \item Eigenvalue $n+2$ with algebraic multiplicity $2n-2$ and eigenvectors $\frac{1}{\sqrt{2}}
           [\u_k,
        -\u_k]^T,$ and $\frac{1}{\sqrt{2}}
           [\v_k,
        -\v_k]^T,$ where $\u_k$ and $\v_k$ are as defined for $L(K_{m,n}).$
        \item Eigenvalue $n$ with algebraic multiplicity $2n-2$ and eigenvectors $\frac{1}{\sqrt{2}}[\u_k,
        \u_k]^T,$ and $\frac{1}{\sqrt{2}}[
           \v_k,
        \v_k]^T,$ where $\u_k$ and $\v_k$ are as defined earlier.
\end{enumerate}
The following result provides an infinite family of graphs exhibiting plus PST.

\begin{thm}
      Let $a$ and $b$ be two vertices in the different partite set of a complete bipartite graph $K_{n,n}$ with $n>2.$ Then the complementary prism $K_{n,n}\overline{K_{n,n}}$ exhibits Laplacian perfect state transfer between $\frac{1}{\sqrt{2}}(\e_a+\e_b)$ and $\frac{1}{\sqrt{2}}(\e_{a'}+\e_{b'})$ if and only if $n$ is a multiple of $4.$ If $n=2,$ then $K_{2,2}\overline{K_{2,2}}$ exhibits perfect state transfer between $\frac{1}{\sqrt{2}}\ob{\e_1+\e_3}$ and $\frac{1}{\sqrt{2}}\ob{\e_{2'}+\e_{4'}}.$
\end{thm}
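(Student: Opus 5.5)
The plan is to compute the spectral decomposition of $\u=\frac{1}{\sqrt2}(\e_a+\e_b)$ and $\v=\frac{1}{\sqrt2}(\e_{a'}+\e_{b'})$ using the orthonormal Laplacian eigenvectors of $K_{n,n}\overline{K_{n,n}}$ listed just before the statement. From this I will determine $\sigma_\u(L)$ and check strong cospectrality. Then I will use the standard criterion: PST occurs between strongly cospectral states at time $\tau$ if and only if $e^{i\tau\lambda}=\gamma$ for $\lambda\in\sigma^+_{\u,\v}$ and $e^{i\tau\lambda}=-\gamma$ for $\lambda\in\sigma^-_{\u,\v}$. This criterion follows from the spectral decomposition of $U_L(t)$, and its necessity direction is \cite[Lemma 5.1]{god7}.

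\emph{Step 1 (eigenvalue support and signs).} Take $a$ in the first partite set and $b$ in the second.
\begin{itemize}
\item For eigenvalue $0$ (eigenvector $\frac{1}{2\sqrt n}\mathbf 1$), $\u$ and $\v$ have the same inner product. So $0\in\sigma^+_{\u,\v}$.
\item For eigenvalue $2$ (eigenvector $\frac{1}{2\sqrt n}[\mathbf 1,-\mathbf 1]^T$), the inner products are opposite. So $2\in\sigma^-_{\u,\v}$.
\item For $n+1\pm\sqrt{n^2+1}$, the vector $\w$ satisfies $\w(a)+\w(b)=0$ because $\w$ has opposite signs on the two parts. Both $\u$ and $\v$ are orthogonal to these eigenvectors, so these irrational eigenvalues are not in the support. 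This is exactly why the plus state across the two parts works.
\item The eigenspace for $n$ consists of all vectors $[\mathbf x,\mathbf x]^T$ and the eigenspace for $n+2$ of all vectors $[\mathbf x,-\mathbf x]^T$, where $\mathbf x$ sums to zero on each part of $K_{n,n}$. Hence $E_n\u=E_n\v$ and $E_{n+2}\u=-E_{n+2}\v$. Both projections are nonzero, since $\e_a+\e_b$ is not orthogonal to all such $\mathbf x$.
\end{itemize}
Thus $\u$ and $\v$ are strongly cospectral, with $\sigma^+_{\u,\v}=\{0,n\}$ and $\sigma^-_{\u,\v}=\{2,n+2\}$. These are all integers, consistent with Theorem \ref{6t3}.

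\emph{Step 2 (time condition).} Taking $\gamma=e^{i\tau\cdot 0}=1$, PST requires $e^{2i\tau}=-1$, $e^{in\tau}=1$ and $e^{i(n+2)\tau}=-1$. The first condition forces $\tau=\frac{(2k+1)\pi}{2}$. The second then requires $n(2k+1)\equiv 0 \pmod 4$, which holds if and only if $4\mid n$. The third condition follows automatically from the first two. So for $n>2$, PST holds if and only if $4\mid n$, at $\tau=\frac{\pi}{2}$ (adjusting to an appropriate odd multiple if needed).

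\emph{Step 3 (the case $n=2$).} Here $K_{2,2}=C_4$ and $\overline{K_{2,2}}=2K_2$, and with the labelling of Figure \ref{5fig1}-type, vertices $1,3$ and $2',4'$ are the relevant ones. I would repeat the same projection computation for $\frac{1}{\sqrt2}(\e_1+\e_3)$ and $\frac{1}{\sqrt2}(\e_{2'}+\e_{4'})$ using the explicit eigenvectors above. The aim is to verify that the support is again contained in $\{0,2,n,n+2\}=\{0,2,4\}$ with an admissible sign pattern, and then exhibit a time, presumably $\tau=\frac{\pi}{2}$, satisfying the phase conditions. The main obstacle is bookkeeping: making sure the support excludes the irrational eigenvalues $3\pm\sqrt5$ for this particular choice of vertices, and that the signs on the eigenvalues $2$ and $4$ (each of multiplicity $2$ here) are consistent across the whole eigenspace. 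This is where the choice of $2',4'$ rather than $1',3'$ matters.
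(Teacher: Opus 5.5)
Your argument for $n>2$ is correct and essentially matches the paper's. You get the same support, with $\sigma^+_{\u,\v}=\{0,n\}$ and $\sigma^-_{\u,\v}=\{2,n+2\}$. The paper then applies the $2$-adic criterion of \cite[Corollary 5.7(3)]{god7}, namely $\nu_2(n)>\nu_2(n-2)=\nu_2(-2)=1$, while you solve the phase equations directly; the two are equivalent.

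The gap is the case $n=2$, which you only sketch, and the sketch rests on a false premise. Identifying the eigenspace of $n$ with $\{[\mathbf x,\mathbf x]^T\}$ silently uses $n\neq 2$. For $n=2$ the eigenvalue $n$ of item (v) coincides with the eigenvalue $2$ of item (ii), so $2$ has multiplicity $3$, not $2$. With parts $\{1,2\}$ and $\{3,4\}$, its eigenspace is spanned by $[\mathbf 1,-\mathbf 1]^T$ and the vectors $[\mathbf x,\mathbf x]^T$ with $\mathbf x\in\operatorname{span}\{\e_1-\e_2,\e_3-\e_4\}$. The eigenspace of $4$ consists of the vectors $[\mathbf x,-\mathbf x]^T$.

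This merging is the whole content of the $n=2$ case. Take $\u=\frac{1}{\sqrt2}(\e_1+\e_3)$ and $\v=\frac{1}{\sqrt2}(\e_{1'}+\e_{3'})$. Their inner products with $[\mathbf 1,-\mathbf 1]^T$ are opposite, but those with $[\mathbf x,\mathbf x]^T$ are equal, so $E_2\u\neq\pm E_2\v$. This is why $n=2$ is excluded from the first part.

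Now take $\mathbf z=\frac{1}{\sqrt2}(\e_{2'}+\e_{4'})$ and use $\mathbf x(2)=-\mathbf x(1)$, $\mathbf x(4)=-\mathbf x(3)$. Set $s=\frac{1}{\sqrt2}(\mathbf x(1)+\mathbf x(3))$. The inner products of $\u$ and $\mathbf z$ are:
\begin{itemize}
\item with $[\mathbf 1,-\mathbf 1]^T$: $\sqrt2$ and $-\sqrt2$;
\item with $[\mathbf x,\mathbf x]^T$: $s$ and $-s$;
\item with $[\mathbf x,-\mathbf x]^T$: $s$ and $s$.
\end{itemize}
Hence $E_2\u=-E_2\mathbf z$ and $E_4\u=E_4\mathbf z$, so $\sigma^+_{\u,\mathbf z}=\{0,4\}$ and $\sigma^-_{\u,\mathbf z}=\{2\}$. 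Note that $4=n+2$ now carries the opposite sign from the generic pattern. Then $\tau=\frac{\pi}{2}$ gives $e^{2i\tau}=-1$ and $e^{4i\tau}=1$. This is what the paper records, concluding via $\nu_2(4)>\nu_2(2)$.

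The labelling is also not a side issue. The second claim needs $1,3$ (and $2,4$) in different parts, i.e.\ parts $\{1,2\}$ and $\{3,4\}$. This is the ordering behind $\w=\frac{1}{\sqrt{2n}}[\mathbf 1_n,-\mathbf 1_n]^T$, and it is just the $n=2$ instance of the hypothesis on $a,b$.

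Your reference to Figure~\ref{5fig1} suggests labelling $C_4$ cyclically. Then $1$ and $3$ are antipodal, hence in the same part. In that case $\frac{1}{\sqrt2}(\e_1+\e_3)$ has inner product $\frac{1}{\sqrt2}\neq 0$ with the vector equal to $\frac12$ on one part and $-\frac12$ on the other. So $3\pm\sqrt5$ enter the support alongside $0$, the ratio condition fails, and by Theorem~\ref{6t3} the state is not even periodic.

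So the obstacle you flag is not bookkeeping: under that labelling it is fatal. Under the correct labelling the irrational eigenvalues drop out for the same reason as in your Step 1. The only real work is then the sign check on the merged three-dimensional eigenspace.
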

\begin{proof}
Let $\u=\frac{1}{\sqrt{2}}\ob{\e_a+\e_b}$ and $\v=\frac{1}{\sqrt{2}}\ob{\e_{a'}+\e_{b'}}.$ The eigenvalue support of $\u$ contains $0,2,n+2$ and $n.$ One may observe that  $0,n\in\sigma_{\u,\v}^+(L)$ and $2,n+2\in \sigma_{\u,\v}^-(L)$ for $n>2.$ Let $\lambda_1=n.$ By \cite[Corollary 5.7(3)]{god7}, PST occurs between $\u$ and $\v,$ if and only if 
    \[\nu_2(n)>\nu_2(n-2)=\nu_2(-2)=1.\] 
    Now, $\nu_2(n-2)=1$ if and only if $n\equiv 0 \pmod {4}.$
    
For $n=2,$ the states $\u=\frac{1}{\sqrt{2}}\ob{\e_1+\e_3}$ and $\w=\frac{1}{\sqrt{2}}\ob{\e_{2'}+\e_{4'}}$ are strongly cospectral with $0,4\in \sigma_{\u,\w}^+(L)$ and $2\in \sigma_{\u,\w}^-(L).$ Now consider $\lambda_1=4.$ Since $v_2(4)>v_2(2),$  by \cite[Corollary 5.7(3)]{god7}, we have PST between $\frac{1}{\sqrt{2}}\ob{\e_1+\e_3}$ and $\frac{1}{\sqrt{2}}\ob{\e_{2'}+\e_{4'}}.$
\end{proof}
The adjacency matrix of $K_{n,n}$ has eigenvalues $n,-n,$ and $0.$ The orthonormal eigenvectors corresponding to $n$ and $-n$ are $\frac{1}{\sqrt{2n}}[\mathbf{1}_n, \mathbf{1}_n]^T$ and $\frac{1}{\sqrt{2n}}[\mathbf{1}_n, -\mathbf{1}_n]^T,$ respectively. For the eigenvalue $0,$ the orthonormal eigenvectors are  $[\w_j,\mathbf{0}]^T$ and  $[\mathbf{0},\w_j]^T,$  $1\leq j \leq n-1,$ where  $\w_j=\frac{1}{\sqrt {j(j+1)}}\tb{1,1,\ldots,1,-j,0,\ldots,0}^T.$
Using Theorem \ref{6t3}, 
The following result rules out the existence of plus PST in the complementary prism $K_{n,n}\overline{K_{n,n}}.$
\begin{thm}\label{6t13}
    There is no perfect plus state transfer in the complementary prism of a complete bipartite graph $K_{n,n}$ with respect to the adjacency matrix.
\end{thm}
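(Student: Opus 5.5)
The plan is to show that no plus state in $K_{n,n}\overline{K_{n,n}}$ is periodic relative to $A$. Perfect state transfer from a state forces that state to be periodic \cite[Lemma 5.1]{god7}, so this rules out plus PST altogether. Periodicity will be tested with the ratio condition of Theorem \ref{6t3}, applied directly to three well-chosen eigenvalues in the support; this avoids having to check closure under algebraic conjugates.

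First I record the spectrum from Theorem \ref{5cor1}. Here $G=K_{n,n}$ is $n$-regular on $2n$ vertices, and its non-main adjacency eigenvalues are $0$ (multiplicity $2n-2$) and $-n$. Hence $A(K_{n,n}\overline{K_{n,n}})$ has the following eigenvalues:
\begin{itemize}
\item $\alpha_\pm(0)=\frac{-1\pm\sqrt5}{2}$, with eigenvectors $[\w,\alpha_\pm(0)\w]^T$, where $\w\perp\mathbf 1$ is a $0$-eigenvector of $A(K_{n,n})$;
\item $\alpha_\pm(-n)=\frac{-1\pm\sqrt{(2n-1)^2+4}}{2}$;
\item $\beta_\pm=\frac{2n-1\pm\sqrt5}{2}$, with eigenvectors $[\mathbf 1,c_\pm\mathbf 1]^T$, where $c_\pm=\beta_\pm-n=\frac{-1\pm\sqrt5}{2}$.
\end{itemize}
The case $n=1$ gives $P_4$ and should be handled separately or excluded, as in the earlier results. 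I will assume $n\ge2$, so that the $0$-eigenspace is nontrivial.

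Next, for an arbitrary plus state $\frac{1}{\sqrt2}(\e_x+\e_y)$, I show that $\beta_+$, $\beta_-$ and at least one of $\alpha_\pm(0)$ lie in its eigenvalue support. There are three cases according to where $x,y$ lie.
\begin{itemize}
\item Both in $K_{n,n}$: the inner product with $[\mathbf 1,c_\pm\mathbf 1]^T$ is proportional to $2$.
\item Both in $\overline{K_{n,n}}$: it is proportional to $2c_\pm$.
\item One in each (say $x=a$, $y=b'$): it is proportional to $1+c_\pm=\frac{1\pm\sqrt5}{2}$.
\end{itemize}
All of these are nonzero, so $\beta_\pm$ are in the support. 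For $\alpha_\pm(0)$, the inner product with $[\w,\alpha\w]^T$ is $\w(x)+\w(y)$, $\alpha(\w(x)+\w(y))$, or $\w(a)+\alpha\w(b)$, respectively. I choose $\w$ supported on the partite set of $x$ (or of $a$), with $\w(x)\neq0$ and $\w(x)+\w(y)\ne0$; this is possible because the $0$-eigenspace contains all vectors that sum to zero on one side. In the mixed case, $\w(a)+\alpha\w(b)\ne0$ for at least one of $\alpha_\pm(0)$, since $\alpha$ is irrational and $\w(a)\neq 0$. This case analysis, especially small $n$ such as $n=2$ where one side has only two vertices, is the only fiddly step; I expect it to be routine.

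Finally, I apply the ratio condition of Theorem \ref{6t3}. Periodicity would require $\frac{\beta_+-\beta_-}{\beta_+-\alpha}\in\mathbb Q$ for the $\alpha\in\{\alpha_+(0),\alpha_-(0)\}$ found in the support. We have $\beta_+-\beta_-=\sqrt5$, while $\beta_+-\alpha_+(0)=n$ and $\beta_+-\alpha_-(0)=n+\sqrt5$. The two ratios are $\sqrt5/n$ and $\sqrt5/(n+\sqrt5)$, both irrational. Hence no plus state is periodic, and therefore there is no plus PST.
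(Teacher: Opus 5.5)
Your proof has a genuine gap at $n=2$, at exactly the step you expected to be routine. Suppose $x,y$ lie in the same partite set of $K_{2,2}$ (or $x',y'$ in the corresponding set of $\overline{K_{2,2}}$). That set is exactly $\{x,y\}$, and every $0$-eigenvector $\w=[\w_1,\w_2]^T$ of $A(K_{2,2})$ satisfies $\mathbf{1}^T\w_1=\mathbf{1}^T\w_2=0$, so $\w(x)+\w(y)=0$. Hence neither $\alpha_+(0)$ nor $\alpha_-(0)$ lies in the support of $\frac{1}{\sqrt2}(\e_x+\e_y)$. The support is exactly $\{\beta_+,\beta_-,\alpha_+(-2),\alpha_-(-2)\}$, and your ratio computation has nothing to apply to.

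The repair is to use the $(-n)$-eigenvector $\frac{1}{\sqrt{2n}}[\mathbf{1}_n,-\mathbf{1}_n]^T$. It takes the value $\pm\frac{2}{\sqrt{2n}}\neq 0$ on any same-side pair, so $\alpha_\pm(-n)$ lie in the support; this is what the paper uses for same-side pairs. For $n=2$ the ratio $\frac{\beta_+-\beta_-}{\alpha_+(-2)-\alpha_-(-2)}=\sqrt{5/13}$ is irrational. If you switch to $\alpha_\pm(-n)$ for all $n$, test against $\beta_+-\alpha_+(-n)$ instead. At $n=6$ one has $(2n-1)^2+4=125$, and then $\frac{\beta_+-\beta_-}{\alpha_+(-6)-\alpha_-(-6)}=\frac15$ is rational.

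Second, the statement includes $n=1$, which you set aside, and there your strategy of showing that no plus state is periodic cannot succeed. In $K_{1,1}\overline{K_{1,1}}\cong P_4$, the plus state on the edge of $K_{1,1}$ (the two middle vertices of $P_4$) is orthogonal to the $\alpha_\pm(-1)$-eigenvectors. Its support is therefore $\{\beta_+,\beta_-\}$, so it is periodic, and the same holds for $\frac{1}{\sqrt2}(\e_{a'}+\e_{b'})$. To exclude PST from these states you need a different argument. One option is to show that no other plus state is strongly cospectral with them (for instance, their $\beta_+$-components differ in absolute value). The other is to cite that $P_4$ has no adjacency plus PST \cite[Corollary 7.5]{god7}, as the paper does.

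Apart from these two cases, the rest of your argument is correct. That covers your computation of the $\beta_\pm$-components and the ratio test. It also covers your choice of $\w$, which is valid for all $n\geq 2$ in the cross-side and mixed cases and for $n\geq 3$ in the same-side case.
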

\begin{proof}
For $n=1,$ the complementary prism $K_{n,n}\overline{K_{n,n}}$ is isomorphic to $P_4,$ and $P_4$ does not admit plus PST relative to $A$ \cite[Corollary 7.5]{god7}. For $n\geq 2,$
   the eigenvalues of $K_{n,n}\overline{K_{n,n}}$ are $\beta_{\pm}(2n,n)=\frac{2n-1\pm\sqrt{5}}{2},$ $\alpha_{\pm}(-n)=\frac{-1\pm\sqrt{(1-2n)^2+4}}{2},$ and $\alpha_{\pm}(0)=\frac{-1\pm\sqrt{5}}{2},$ with eigenvectors as given in Theorem \ref{5cor1}. Let $a,b$ and $a',b'$ be the vertices in $K_{n,n}$ and $\overline{K_{n,n}},$ respectively. If $a$ and $b$ are in the different partite set of $K_{n,n},$ then the eigenvalue support of $\frac{1}{\sqrt{2}}\ob{\e_a+\e_b}$ and $\frac{1}{\sqrt{2}}\ob{\e_{a'}+\e_{b'}}$ contain $\beta_{\pm}(2n,n)$ and $\alpha_{\pm}(0).$ If $a$ and $b$ are in the same partite set, then the eigenvalue supports of plus states $\frac{1}{\sqrt{2}}\ob{\e_a+\e_b}$ and $\frac{1}{\sqrt{2}}\ob{\e_{a'}+\e_{b'}}$ contain $\beta_{\pm}(2n,n)$ and $\alpha_{\pm}(-n).$ Lastly, for any $a,b$ in $K_{n,n},$ the eigenvalue support of $\frac{1}{\sqrt{2}}\ob{\e_a+\e_{b'}}$ contains $\beta_{\pm}(2n,n)$ and $\alpha_{\pm}(-n).$ Therefore none of these states are periodic by Theorem \ref{6t3}, resulting in no plus PST in $K_{n,n}\overline{K_{n,n}}.$
\end{proof}
Since $K_{n,n}$ is $n$-regular and $Q(K_{n,n})=nI+A(K_{n,n}),$ the orthonormal eigenvectors of  $A(K_{n,n})$ are also orthonormal eigenvectors of $Q(K_{n,n})$ with corresponding eigenvalues $2n,0,$ and $n,$ respectively. Therefore, the eigenvalues of $Q\ob{K_{n,n}\overline{K_{n,n}}}$ are $\beta_{\pm}(2n,n)=2n\pm\sqrt{2},$ $\alpha_{\pm}(0)=n\pm\sqrt{(n-1)^2+1},$ and $\alpha_{\pm}(n)=n\pm\sqrt{2},$ with eigenvectors as given in Theorem \ref{5t5}. For $n=1,$ the complementary prism $K_{n,n}\overline{K_{n,n}}$ represents the path $P_4.$ Since $P_4$ admits pair PST between $\frac{1}{\sqrt{2}}(\e_1-\e_2)$ and $\frac{1}{\sqrt{2}}(\e_3-\e_4)$ with respect to the Laplacian matrix,  \cite[Theorem 8.3]{che1} implies that $P_4$ also admits PST between $\frac{1}{\sqrt{2}}(\e_1+\e_2)$ and $\frac{1}{\sqrt{2}}(\e_3+\e_4)$ with respect to the signless Laplacian matrix.  For $n\geq 2,$
using a similar approach as in Theorem \ref{6t13}, we obtain the following result.
\begin{thm}
    There is no perfect plus state transfer in the complementary prism of a complete bipartite graph $K_{n,n}$ with respect to the signless Laplacian matrix for $n\geq 2.$ 
\end{thm}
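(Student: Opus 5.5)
We follow the template of the proof of Theorem \ref{6t13}: for every plus state we find, inside its signless Laplacian eigenvalue support, two eigenvalues that violate the condition of Theorem \ref{6t3}. Such a state is not periodic, and since PST forces both endpoint states to be periodic \cite[Lemma 5.1]{god7}, no plus PST can occur. We use the spectrum of $Q\ob{K_{n,n}\overline{K_{n,n}}}$ stated above, namely $\beta_{\pm}(2n,n)=2n\pm\sqrt{2}$, $\alpha_{\pm}(0)=n\pm\sqrt{(n-1)^2+1}$ and $\alpha_{\pm}(n)=n\pm\sqrt{2}$. Their eigenvectors, from Theorem \ref{5t5}, are $[\mathbf{1},(\beta_{\pm}-2n-1)\mathbf{1}]^T$, $[\w,(\alpha_\pm(-n\text{ part})-1)\w]^T$ with $\w=[\mathbf{1}_n,-\mathbf{1}_n]^T$ for the $Q$-eigenvalue $0$, and $[\w_j,\mathbf{0}]^T$, $[\mathbf{0},\w_j]^T$ (each paired with its multiple in the second copy) for the $Q$-eigenvalue $n$.

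\textbf{Step 1 (the three kinds of plus state).} Let $a,b$ be vertices of $K_{n,n}$ and $a',b'$ their copies in $\overline{K_{n,n}}$.
\begin{enumerate}[label=(\roman*)]
\item The state $\frac{1}{\sqrt2}(\e_a+\e_{b'})$ has a nonzero inner product with $[\mathbf{1},c_\pm\mathbf{1}]^T$, because $1+c_\pm\neq0$. The entries $c_\pm=-1\pm\sqrt2$ of $\beta_\pm-2n-1$ show this. So $\beta_\pm(2n,n)$ lie in the support.
\item If $a,b$ lie in different partite sets, the projections of $\e_a+\e_b$ onto the $\w$-type and $\w_j$-type vectors cancel. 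The support then contains $\beta_\pm(2n,n)$ and, through the $\w_j$ eigenvectors, $\alpha_\pm(n)$.
\item If $a,b$ lie in the same partite set, the support of $\frac{1}{\sqrt2}(\e_a+\e_b)$ contains $\beta_\pm(2n,n)$ and $\alpha_\pm(0)$, since $\mathbf{1}^T(\e_a+\e_b)\neq0$ and $\w^T(\e_a+\e_b)=\pm2\neq0$.
\end{enumerate}
For the primed states we argue identically, using the second blocks $c_\pm\mathbf{1}$, $(\cdot)\w$ and $(\cdot)\w_j$; each multiplier is nonzero (for instance $c_\pm\neq0$). In each case we also record a further eigenvalue in the support, so that we always have at least three, for example $\alpha_\pm(n)$ via $\w_j$ whenever $a\neq b$ lie in the same part.

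\textbf{Step 2 (applying Theorem \ref{6t3}).} In every case the support contains a pair among $2n\pm\sqrt2$, $n\pm\sqrt2$, $n\pm\sqrt{(n-1)^2+1}$. If the support contains both $2n\pm\sqrt2$ and $n\pm\sqrt2$, it has the form $\frac12(a+b_j\sqrt{\Delta})$ with $\Delta=2$. In that case we must use the \emph{common} integer part: $4n\neq2n$ kills form (ii). Alternatively, check the ratio condition directly:
\[\frac{(2n+\sqrt2)-(2n-\sqrt2)}{(n+\sqrt2)-(2n-\sqrt2)}=\frac{2\sqrt2}{-n+2\sqrt2}\notin\mathbb{Q}.\]
When $\alpha_\pm(0)$ is involved, $(n-1)^2+1$ is never a perfect square for $n\ge2$, and the irrationals $\sqrt2$ and $\sqrt{(n-1)^2+1}$ have different integer parts or different square-free parts. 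So again form (ii) fails, as does form (i). Hence no plus state is periodic.

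\textbf{Main obstacle.} The delicate part is Step 2 in the sub-case where the support contains only $\beta_\pm$ and $\alpha_\pm(n)$, both involving $\sqrt2$. Here I would verify the ratio condition directly rather than rely on the ``same $a$'' clause. I would also double-check that the support is closed under algebraic conjugates, which holds because $\pm$ pairs appear together. There is one exceptional case: $(n-1)^2+1=2$ at $n=2$. Then $\alpha_\pm(0)=2\pm\sqrt2$ coincides with the other values, and that case would be handled by the explicit ratio computation above.
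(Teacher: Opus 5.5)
Your route is the paper's: the paper proves this ``using a similar approach as in Theorem~\ref{6t13}'', i.e.\ by locating eigenvalues in the supports and invoking Theorem~\ref{6t3}. Your Step~2 is sound in substance. The $\beta_\pm=2n\pm\sqrt2$ have rational part $2n$ and every $\alpha$-eigenvalue has rational part $n$, so form (ii) fails whenever both kinds occur. The $n=2$ collapse is also covered by your ratio $\frac{2\sqrt2}{2\sqrt2-n}\notin\mathbb{Q}$.

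The gap is in case (i). For $\frac{1}{\sqrt2}(\e_a+\e_{b'})$ you only show that $\beta_\pm(2n,n)$ lie in the support, and you never exhibit an $\alpha$-eigenvalue there, although Step~2 assumes one. This matters: a support equal to $\{\beta_+,\beta_-\}$ satisfies the ratio condition trivially, and the state would then be periodic. The repair is short. Take $\w=[\mathbf{1}_n,-\mathbf{1}_n]^T$ as in your proposal and $d_\pm=\alpha_\pm(0)-1=n-1\pm\sqrt{(n-1)^2+1}$. The inner product of $[\e_a,\e_b]^T$ with $[\w,d_\pm\w]^T$ is $\w(a)(1+d_\pm)$ when $a,b$ lie in the same part (including $b=a$), and $\w(a)(1-d_\pm)$ otherwise, where $\w(a)\in\{\pm1\}$. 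Since $1+d_\pm=n\pm\sqrt{(n-1)^2+1}$ and $1-d_\pm=2-n\mp\sqrt{(n-1)^2+1}$ vanish only when $n=1$, $\alpha_\pm(0)$ lies in the support for every $n\ge 2$. This is the exact analogue of the $\alpha_\pm(-n)$ the paper uses for this state in Theorem~\ref{6t13}.

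There are also two smaller slips.
\begin{itemize}
\item In (ii) only the $\w$-type projection cancels, because $\w(a)+\w(b)=0$. The $\w_j$-type projection does not cancel, and your next sentence uses it to obtain $\alpha_\pm(n)$.
\item The claim that $\alpha_\pm(n)$ is reached ``via $\w_j$ whenever $a\neq b$ lie in the same part'' fails for $n=2$. There $\e_a+\e_b$ restricted to $K_{2,2}$ is $[\mathbf{1}_2,\mathbf{0}]^T$, which is orthogonal to every $\w_j$-type vector. This is harmless: $\alpha_\pm(2)=\alpha_\pm(0)$ is then reached through $\w$. In any case, $\beta_\pm$ together with one $\alpha$-pair already gives four support eigenvalues, so no ``further eigenvalue'' is needed.
\end{itemize}
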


 \section*{Disclosure statement}
  No potential conflict of interest was reported by the author(s).

\section*{Acknowledgements}
 We sincerely thank the reviewers for their insightful comments and valuable suggestions to improve the manuscript. 
S. Mohapatra is supported by the Department of Science and Technology, Government of India (INSPIRE Fellowship: IF210209).

%%%%~~Bibliography~~%%%%%%

\bibliographystyle{abbrv}
\bibliography{references}

\end{document}